\newtheorem{thm}{Theorem}[section]
\newtheorem{cor}[thm]{Corollary}
\newtheorem{lem}[thm]{Lemma}
\newtheorem{prop}[thm]{Proposition}
\newtheorem*{ques}{Question}
\theoremstyle{definition}
\newtheorem{defn}[thm]{Definition}
\newtheorem{exm}[thm]{Example}
\newtheorem*{rem}{Remark}
\newtheorem*{ack}{Acknowledgements}
\numberwithin{equation}{subsection}
\DeclareMathOperator{\ext}{Ext}
\newcommand{\seq}[1]{\left<#1\right>}
\def\m{{\frak m}}
\def\a{{\bold a}}
\def\b{{\bold b}}
\def\e{{\bold e}}
\def\0{{\bold 0}}
\def\x{{\bold x}}
\def\D{{\Delta}}
\def\F{{\mathcal F}}
\def\NZQ{\Bbb}
\def\NN{{\NZQ N}}
\def\ZZ{{\NZQ Z}}
 \DeclareMathOperator{\lk}{lk}
\DeclareMathOperator{\st}{st}
\DeclareMathOperator{\Ass}{Ass}
\begin{document}
\title[Combinatorial characterizations of the Cohen-Macaulayness of the second power of edge ideals]
 {Combinatorial characterizations of the Cohen-Macaulayness of the second power of edge ideals}

\author{D\^o Trong Hoang}
\address{Institute of Mathematics, Box 631, Bo Ho, 10307 Hanoi, Viet Nam}
\email{dotronghoang@gmail.com}
\author{Nguy\^en C\^ong Minh}
\address{Department of Mathematics, Hanoi National University of Education, 136 Xuan Thuy, Hanoi,
Vietnam}
\email{ngcminh@gmail.com; minhnc@hnue.edu.vn}
\author{ Tr\^an Nam Trung}
\address{Institute of Mathematics, Box 631, Bo Ho, 10307 Hanoi, Viet Nam}
\email{tntrung@math.ac.vn}
\thanks{The second author partially supported by the National Foundation for Science and Technology Development (Vietnam) under grant number 101.01-2011.08. The first and third authors are partially supported by the National Foundation for Science and Technology Development (Vietnam) under grant number 101.01-2011.48.} 
\subjclass[2000]{13D45, 05C90, 05E40, 05E45}
\keywords{Symbolic powers,  edge ideal, Cohen-Macaulay, Buchsbaum, generalized Cohen-Macaulay, bipartite graphs}
\date{}
\dedicatory{}
\commby{}
% -----------------------------------------------------------
\begin{abstract}
Let $I(G)$ be the edge ideal of a simple graph $G$. In this paper, we will give sufficient and necessary combinatorial conditions of $G$ in which the second symbolic and ordinary power of its edge ideal are Cohen-Macaulay (resp. Buchsbaum, generalized Cohen-Macaulay). 

As an application of our results, we will classify all bipartite graphs in which the second (symbolic) powers are Cohen-Macaulay (resp. Buchsbaum, generalized Cohen-Macaulay). 
\end{abstract}

% -----------------------------------------------------------
\maketitle
% -----------------------------------------------------------
\section*{Introduction}
Let $S=k[x_1,x_2,\ldots,x_n]$ be a polynomial ring in $n$ variables over a field $k$. Let $\Delta$ be a simplicial complex on $ [n]=\{1,\ldots,n\}$. The Stanley-Reisner ideal of $\D$ is defined as 
$$I_\D=\big(\prod_{i\in F} x_i~|~ F\notin\D\big).$$
A squarefree monomial ideal of $S$ can be written as the Stanley-Reisner ideal of a suitable simplicial complex. Cohen-Macaulayness (resp. Buchsbaumness, generalized Cohen-Macaulayness) of these ideals have been studied by several authors (see \cite{S}, \cite{St}, \cite{BH}, \cite{V}).  Recently, Minh and Trung in \cite{MT2} and Varbaro in \cite{Va} independently proved that $S/I_{\D}^{(m)}$ is Cohen-Macaulay for all $m\in\NN$ if and only if $\D$ is a matroid, where $I_{\D}^{(m)}$ is the $m^{th}$-symbolic power of $I_\D$. The matroid is one of the important concepts of discrete mathematics and is largely applied, for example the graph theory. Later on, Terai and Trung proved that $S/I_{\D}^{(m)}$ is Cohen-Macaulay for some $m\in\NN, m\ge 3$ if and only if $\D$ is a matroid \cite{TT}. They also proved similar characterizations for the properties of being Buchsbaum or generalized Cohen-Macaulay. So, it is natural to look for a characterization of these properties for the second symbolic power of the ideal $I_{\D}$. Then, here, we consider the following question:

\medskip
\noindent
{\bf Question.}
\begin{itemize}
{\it
\item[(1)] When is $S/I_{\Delta}^{(2)}$ Cohen-Macaulay?\\
\item[(2)] When is $S/I_{\Delta}^{(2)}$ Buchsbaum?\\
\item[(3)] When is $S/I_{\Delta}^{(2)}$ generalized Cohen-Macaulay?
}
\end{itemize}

The above questions are of interest for what have been mentioned at the beginning, and here we answer them under the additional assumption that $\D$ is a flag simplicial complex. Let us consider the first one: An answer to this question is already given in \cite[Theorem 2.1]{MT2} without the flag condition of $\D$ (also see \cite{MN1} and \cite{MN2} for some special cases of the second part of this question). However, \cite[Theorem 2.1]{MT2} does not give a characterization in a combinatorial fashion, rather it involves the Cohen-Macaulayness of a family $ \mathcal{F}$ of certain subcomplexes of $\D$. So it makes perfectly sense to look for a nicer characterization. In this paper, we will provide a similar result to the second symbolic power of the edge ideal of a graph $G$. In this situation, we only have to check the Cohen-Macaulayness for a set of certain subgraphs of $G$ which, roughly speaking, is a proper subset of $ \mathcal{F}$. 

Let $G$ be a graph without isolated vertices with the vertex set $V(G)=[n]$ and the edge set $E(G)$. The squarefree monomial ideal $$I(G) = \big( x_ix_j  ~|~  ij \in E(G) \big)\subseteq S,$$
is called the edge ideal of $G$. For a graph $G$, we denote by $\D(G)$, the independence complex of $G$, which is the simplicial complex on $[n]$ whose the Stanley-Reisner ideal is $I(G)$. This means that faces of $\D(G)$ are exactly the independent sets of vertices of $G$. The first main result of this paper is:

\medskip
\noindent
{\bf Theorem ~\ref{Main1}.}
{\it
Let $I(G)$ be the edge ideal of a graph $G$. Let $\D=\Delta(G)$ be the independence complex of $G$. The following conditions are equivalent:
\begin{itemize}
\item [(i)] $S/I(G)^{(2)}$ is Cohen-Macaulay.
\item[(ii)] $\D$ is Cohen-Macaulay and $\st_{\Delta}(p) \cup \st_{\Delta}(q)$ is Cohen-Macaulay for all $pq\in E(G)$.
\item[(iii)] $G$ is a Cohen-Macaulay graph and for all edge $pq$, $G_{pq}$ is Cohen-Macaulay and $\alpha(G_{pq})=\alpha(G)-1$. When this is the case, $G$ is called \em{special Cohen-Macaulay}.
\end{itemize}
}
\medskip 
\noindent
{\bf Corollary ~\ref{Cor3}.}
{\it Let $I(G)$ be the edge ideal of a graph $G$. Then, $S/I(G)^{2}$ is Cohen-Macaulay if and only if $G$ is special Cohen-Macaulay and has no triangles (i.e. it has no subgraph which forms a triangle).}

\medskip 
For the generalized Cohen-Macaulay property, we give the following characterization.
 
\medskip 
\noindent
{\bf Corollary~\ref{Cor1}.}
{\it
$S/I_{\Delta}^{(2)}$ is generalized Cohen-Macaulay if and only if $\Delta$ and $\bigcup_{i\in V}\st_{\D}(V\setminus\{i\})$ are Buchsbaum for all subsets $V\subseteq [n]$ with $1\le |V|\le \dim \Delta +1$.
}
\medskip 

(See Proposition ~\ref{thm1.3} for a more general version of this corollary).

Independence complexes belong to the class of  contractible complexes, which was introduced by R. Ehrenborg and G. Hetyei \cite{EH}, see Definition 3.6. We will give the structure of the $\ZZ^n$-graded local cohomology modules of $S/I_{\D}^{(2)}$ when $\D$ is a contractible generalized Cohen-Macaulay complex. 

\medskip
\noindent
{\bf Theorem ~\ref{Main2}.}
{\it
 Let $\D$ be a contractible simplicial complex on $[n]$. Assume that $S/I_{\D}^{(2)}$  is generalized Cohen-Macaulay. Then 
$$H^i_{\m}(S/I_{\D}^{(2)})= [H^i_{\m}(S/I_{\D}^{(2)})]_{\bf 0} \oplus \bigoplus\limits_{u=1}^n [H^i_{\m}(S/I_{\D}^{(2)})]_{{\bf e}_u} \oplus \bigoplus_{\{u,v\}\notin \D}[H^i_{\m}(S/I_{\D}^{(2)})]_{\e_u+\e_v},$$
for all $i\in\NN$ and $i<\dim(S/I_{\D}^{(2)})$.
}
\medskip 

The following theorem characterizes graphs $G$ in which the second symbolic powers $S/I(G)^{(2)}$ are Buchsbaum.

\medskip
\noindent
{\bf Theorem ~\ref{Main3}.}
{\it
Let $I(G)$ be the edge ideal of a graph $G$. The following conditions are equivalent:
\begin{itemize}
\item [(i)] $S/I(G)^{(2)}$ is Buchsbaum.
\item[(ii)] $S/I(G)^{(2)}$ is 1-Buchsbaum.
\item[(iii)] $G$  is Cohen-Macaulay  and $S/I(G)^{(2)}$ is generalized Cohen-Macaulay.
\item[(iv)] $G$ is Cohen-Macaulay and $G_i$ is special Cohen-Macaulay for all $i\in [n]$.
\end{itemize}
}

As an application of our results, we will classify all bipartite graphs in which the second symbolic powers are Cohen-Macaulay (resp. Buchsbaum, generalized Cohen-Macaulay), see Propositions ~\ref{Ex1}, ~\ref{Ex2} and ~\ref{Ex3}.

Let us explain the organization of this paper. In Section 2, we set up some basic notations and terminologies for simplicial complexes and graphs.  Section 3 is devoted to prove a combinatorial characterization of the Cohen-Macaulayness of the second (symbolic) power of an edge ideal. In Section 4, we will characterize generalized Cohen-Macaulay monomial ideals. This characterization can be checked via linear programming methods. Afterwards, we consider the generalized Cohen-Macaulay (resp. Buchsbaum) property of the second (symbolic) power of an edge ideal and prove Theorems ~\ref{Main2} and ~\ref{Main3}.  In the last section, we will give a classification of bipartite graphs in which the second symbolic powers are Cohen-Macaulay (resp. Buchsbaum, generalized Cohen-Macaulay).

\section{Preliminary}
We refer the reader to \cite{St} for detailed information about combinatorial and algebraic background. We also will use some notation for graphs according to \cite{D}. 
 
Let $I$ be a monomial ideal and $\m = (x_1,x_2,\ldots,x_n)S$ the homogeneous maximal ideal in $S$. Also $H^i_{\m}(S/I)$ denotes the $i$th-local cohomology module of $S/I$ with respect to $\m$. A residue class ring $S/I$ is called a generalized Cohen-Macaulay (resp. Buchsbaum) ring if $H^i_{\m}(S/I)$ has finite length (resp. the canonical map $$\ext_S^i(S/\m,S/I) \to H_{\m}^{i}(S/I)$$ is surjective) for all $i<\dim(S/I)$. The ideal $I$ is called unmixed if $\dim(S/I)=\dim(S/P)$ for all $P\in\Ass(S/I)$. It should be noted that $I$ is unmixed if it is Cohen-Macaulay. 

Let $\NN$ (resp. $\ZZ$) denote the set of nonnegative integers (resp. integers). A simplicial complex $\Delta$ on $ [n]=\{1,\ldots,n\}$ is a collection of subsets of $[n]$ such that $F \in \Delta$ whenever $F \subseteq F'$ for some $F' \in \Delta$. An element $F\in\D$ which is a maximal face with respect to inclusion, is called a facet of $\D$. For each $F\in\D$, we put $\dim F = |F|-1$ and $\dim \Delta = \max \{ \dim F ~|~ F \text{ is a facet of $\D$} \}$, which is called the dimension of $\Delta$. If all facets of $\D$ have the same dimension, we say that $\D$ is pure. For a simplicial complex $\D$, we define two subcomplexes
$$\lk_{\D}F=\{G\in\D\mid G\cup F\in\D, G\cap F=\emptyset\},$$ $$\st_{\D}F=\{G\in\D\mid G\cup F\in\D\}$$
for any $F\subseteq [n]$. Moreover, if $\D$ is pure and $F\in\D$ then $\dim(\lk_{\D}F)=\dim(\D)-|F|$ and $\dim(\st_{\D}F)=\dim(\D)$. We denote by $\widetilde{H}_{j}(\D;k)$ (resp. $\widetilde{H}^{j}(\D;k)$) the reduced (co)homology group of a simplicial complex $\D$ over $k$ (cf. \cite[Section 5.3]{BH}). A simplicial complex $\D$ is called Cohen-Macaulay (resp. Buchsbaum, generalized Cohen-Macaulay) if so is $k[\D]$ over any a field $k$. It is known that $\D$ is Cohen-Macaulay (resp. Buchsbaum) if and only if $\widetilde{H}_{j}(\lk_{\D}F;k)=(0)$ for all $j<\dim(\D)-|F|$, $F\in\D$ (resp. $\widetilde{H}_{j}(\lk_{\D}F;k)=(0)$ for all $j<\dim(\D)-|F|$, $F\in\D\setminus\{\emptyset\}$ and $\D$ is pure) (see \cite[Corollary 5.3.9]{BH}, \cite[Theorem 3.2]{S}). It should be noted that any Buchsbaum ring is generalized Cohen-Macaulay and the converse is also true for Stanley-Reisner rings. In the last case, $\D$ is always pure. 

For each $0\ne m\in\NN$, the $m^{th}$-symbolic power of $I_\D$ is the ideal
$$I_\D^{(m)} =  \bigcap_{F \in \F(\D)}P_F^m,$$
where $\F(\D)$ denotes set of facets of $\D$ and $P_F=(x_j~|~j\notin F)S$.

Let $I$ be a monomial ideal in $S$. In \cite{T}, Takayama found the following combinatorial formula for $\dim_kH_\m^i(S/I)_\a$ for all $\a\in\ZZ^n$ in terms of certain complexes. For every $\a = (a_1,\ldots,a_n) \in \ZZ^n$ we set $G_\a = \{i~|~\ a_i < 0\}$ and write $\x^{\a} = \Pi_{j=1}^n x_j^{a_j}$. The degree complex $\D_\a(I)$ is the simplicial complex whose faces are sets of form 
 $F \setminus G_\a$, where $G_\a\subseteq F\subseteq [n]$, so that for every minimal generator $x^\b$ of $I$ there exists an index $i \not\in F$ with $a_i < b_i$. Let $\D$ denote the simplicial complex such that $\sqrt{I}$ is the Stanley-Reisner ideal of $\D$. For $j = 1,\ldots,n$, let $\rho_j(I)$ denote the maximum of the $i$th coordinates of all vectors $\b \in \NN^n$ such that $x^\b$ is a minimal monomial generator of $I$. 

\begin{thm}[T]\label{T}
$$\dim_kH_\m^i(S/I)_\a = 
\begin{cases}
\dim_k\widetilde H_{i-|G_\a|-1}(\D_\a(I);k) & \text{\rm, if }\ G_\a \in \D\ \text{\rm and}\\
&\ \ \  \ a_j  < \rho_j(I)\ \text{\rm for}\ j=1,\ldots,n,\\
0 & \text{\rm, else. }
\end{cases} $$
\end{thm}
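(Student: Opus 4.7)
The plan is to compute $H^i_{\m}(S/I)$ via the \v{C}ech complex on $x_1,\ldots,x_n$ and read off its $\ZZ^n$-graded pieces. Since $I$ is a monomial ideal, the \v{C}ech complex
$$\check{C}^{p}(S/I)\;=\;\bigoplus_{|F|=p}(S/I)[x_F^{-1}]$$
inherits the $\ZZ^n$-grading, and $H^i_{\m}(S/I)_{\a}$ is the $i$-th cohomology of its degree-$\a$ subcomplex. A short direct check on monomials shows that $(S/I)[x_F^{-1}]_{\a}$ is at most one-dimensional, spanned by $\x^{\a}$, and is nonzero precisely when (i) $a_j \ge 0$ for every $j \notin F$ (equivalently $G_{\a} \subseteq F$), and (ii) for every minimal monomial generator $\x^{\b}$ of $I$ there exists $i \notin F$ with $a_i < b_i$.

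The key step is to identify the resulting complex with a shift of the augmented cochain complex of the degree complex $\D_{\a}(I)$. Writing $F = G_{\a} \sqcup F'$ with $F' \subseteq [n] \setminus G_{\a}$, condition (ii) says exactly that $F'$ is a face of $\D_{\a}(I)$, and $|F| = p$ becomes $|F'| = p - |G_{\a}|$. A sign-tracking matches the \v{C}ech differentials (with their Koszul signs) to the simplicial coboundary, producing an isomorphism of complexes between $\check{C}^{\bullet}(S/I)_{\a}$ and the augmented cochain complex of $\D_{\a}(I)$ shifted by $|G_{\a}|+1$. Taking cohomology, and using that reduced homology and cohomology have the same dimension over a field, yields
$$\dim_k H^i_{\m}(S/I)_{\a}\;=\;\dim_k \widetilde{H}_{i-|G_{\a}|-1}(\D_{\a}(I);k)$$
in the main case, where $G_{\a} \in \D$ and $a_j < \rho_j(I)$ for all $j$.

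The two vanishing cases will be handled separately. If $G_{\a} \notin \D$, then $G_{\a}$ contains a minimal non-face of $\D$, hence some minimal generator $\x^{\b}$ of $I$ has $\supp(\b) \subseteq G_{\a}$; for any $F \supseteq G_{\a}$ this $\x^{\b}$ witnesses the failure of condition (ii) (since $b_i=0 \le a_i$ for all $i \notin F$), and condition (i) fails otherwise, so $\check{C}^{\bullet}(S/I)_{\a} = 0$. If instead $a_j \ge \rho_j(I)$ for some $j$, then conditions (i) and (ii) are insensitive to the $j$-th coordinate of $\a$, so multiplication by $x_j$ induces an isomorphism $\check{C}^{\bullet}(S/I)_{\a} \to \check{C}^{\bullet}(S/I)_{\a+\e_j}$. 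Since $H^i_{\m}(S/I)$ is $\m$-torsion and $x_j \in \m$, iterating this isomorphism forces $H^i_{\m}(S/I)_{\a} = 0$.

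The main obstacle will be the careful bookkeeping in the second step: aligning the \v{C}ech differentials with the simplicial coboundary requires fixing compatible orderings on $[n]$ and on $F \setminus G_{\a}$, and one must handle the degenerate cases $\D_{\a}(I) = \emptyset$ versus $\D_{\a}(I) = \{\emptyset\}$ correctly within the reduced (co)chain conventions so that the indices in the final formula come out right. The two vanishing arguments, by contrast, are routine once the \v{C}ech description in multidegree $\a$ is in hand.
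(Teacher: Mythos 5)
Your proposal is correct. Note that the paper does not prove this statement at all: Theorem \ref{T} is quoted from Takayama's paper [T] (it is the generalization of Hochster's formula to arbitrary monomial ideals), so there is no internal proof to compare against; your \v{C}ech-complex argument --- computing the degree-$\a$ strand, observing that the summand indexed by $F$ is nonzero exactly when $G_\a\subseteq F$ and $F\setminus G_\a\in\D_\a(I)$, and identifying the strand with the augmented cochain complex of $\D_\a(I)$ shifted by $|G_\a|+1$ --- is essentially Takayama's original proof. Your two vanishing arguments are also sound: when $G_\a\notin\D$ a generator $\x^\b$ with $\supp(\b)\subseteq G_\a$ kills every summand, and when $a_j\ge\rho_j(I)$ the multiplication-by-$x_j$ isomorphism together with $\m$-torsionness forces vanishing (equivalently, one can note that in this case $\D_\a(I)$ is a cone with apex $j$, so the right-hand side vanishes as well, which shows the stated case distinction is consistent with the uniform formula $\dim_k H^i_\m(S/I)_\a=\dim_k\widetilde H_{i-|G_\a|-1}(\D_\a(I);k)$).
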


The degree complex can be computed in a more simple way and this result is used later. For every $F\subseteq [n]$, let $S_F=S[x_i^{-1}~|~i\in F]$ and $P_F=(x_i~|~i \notin F)S$. Then the minimal primes of $I$ are the ideals $P_F$, $F \in \F(\D)$.
Let $I_F$ denote the $P_F$-primary component of $I$. If $I$ has no embedded components, we have
$$I = \bigcap_{F \in \F(\D)}I_F.$$ 
Let $\Gamma$ be a subcomplex of $\Delta$ with $\mathcal{F}(\Gamma)\subseteq \mathcal{F}(\Delta)$. We set 
$$L_{\Gamma}(I) = \Big\{ {\a}\in \NN^n ~|~ {\bf x}^{\a} \in \bigcap_{F\in \mathcal{F}(\Delta)\setminus \mathcal{F}(\Gamma) } I_{F} \setminus \bigcup_{G\in \mathcal{F}(\Gamma)} I_G \Big\}.$$

\begin{lem}[MT2, Lemma 1.5]\label{degcomplex}
Assume that $I$ is unmixed. For $\a\in\NN^n$, we have $\D_\a(I)$ is pure of dimension $\dim(\D)$ and 
$$\F(\D_\a) = \big\{F \in \F(\D)~|~\ x^\a \not\in I_F\big\}.$$
Moreover, $\D_{\a}(I)=\Gamma$ if and only if $\a\in L_{\Gamma}(I)$.
\end{lem}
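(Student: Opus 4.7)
The plan is to reduce the description of the degree complex $\D_\a(I)$ for $\a\in\NN^n$ to the primary decomposition $I=\bigcap_{F\in\F(\D)}I_F$, which is available because $I$ is unmixed and therefore has no embedded components. Since $\a\in\NN^n$ the set $G_\a$ is empty, so by the definition preceding Theorem \ref{T}, a subset $F\subseteq[n]$ is a face of $\D_\a(I)$ precisely when for every minimal monomial generator $\x^\b$ of $I$ there is some $i\notin F$ with $a_i<b_i$; this is exactly the condition $\x^\a\notin I\cdot S_F$.

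Next, I would unwind $I\cdot S_F=\bigcap_{F'\in\F(\D)}I_{F'}S_F$ through two localization observations. First, if $F\not\subseteq F'$, then any $x_i$ with $i\in F\setminus F'$ lies in $P_{F'}$ and is invertible in $S_F$, so $I_{F'}S_F=S_F$ and that component drops out. Second, if $F\subseteq F'$, then every minimal generator of $I_{F'}$ has support contained in $[n]\setminus F'$, which is disjoint from $F$, so inverting $\{x_i\mid i\in F\}$ does not affect divisibility by such a generator and $\x^\a\in I_{F'}S_F$ if and only if $\x^\a\in I_{F'}$. Combining these, $F\in\D_\a(I)$ iff $F\in\D$ and $\x^\a\notin I_{F'}$ for some facet $F'$ of $\D$ containing $F$.

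The facet characterization then falls out: taking $F=F'$ above shows that a facet of $\D$ is a face of $\D_\a(I)$ exactly when $\x^\a\notin I_{F'}$, and since every face of $\D_\a(I)$ is contained in such an $F'$, these are precisely the facets of $\D_\a(I)$. Purity of $\D_\a(I)$ in dimension $\dim\D$ is immediate because $\D$ is pure by unmixedness. For the ``Moreover'' part, $\Gamma$ is pure of the same dimension since $\F(\Gamma)\subseteq\F(\D)$, so $\D_\a(I)=\Gamma$ is equivalent to $\F(\D_\a(I))=\F(\Gamma)$; by the facet characterization this translates into $\x^\a\in I_F$ for every $F\in\F(\D)\setminus\F(\Gamma)$ and $\x^\a\notin I_F$ for every $F\in\F(\Gamma)$, which is exactly the condition $\a\in L_\Gamma(I)$.

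The main technical hurdle is the localization argument in the second case above: one has to verify carefully that inverting the variables $\{x_i\mid i\in F\}$ neither moves $\x^\a$ into $I_{F'}$ nor out of it, using precisely that the minimal generators of $I_{F'}$ are monomials in variables indexed by $[n]\setminus F'\subseteq[n]\setminus F$. Once this support observation is in place the rest is a formal unravelling of definitions.
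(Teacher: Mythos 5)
Your argument is correct, and it is essentially the standard proof: the paper itself gives no argument for this lemma (it is quoted from [MT2, Lemma 1.5]), and the proof there proceeds exactly as you do, by identifying membership of $F$ in $\D_\a(I)$ with $x^\a\notin IS_F$ and then localizing the primary decomposition $I=\bigcap_{F'\in\F(\D)}I_{F'}$, using that each $I_{F'}$ is generated by monomials in the variables outside $F'$. The facet description, purity via unmixedness, and the translation of $\D_\a(I)=\Gamma$ into $\a\in L_\Gamma(I)$ all follow just as you state.
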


A graph $G$ consists of a finite set $V(G)$ of vertices and a collection $E(G)$ of subsets of $V(G)$ called edges, such that every edge of $G$ is a pair $\{u,v\}$ for some $u, v \in V(G)$. Throughout this paper, we assume that a graph $G$ has no loops, that is, we are requiring $u \ne v$ for $\{u,v\} \in E(G)$. Without loss of generality, we assume that $V(G)=[n]$ for some $n\in\NN$. Two vertices $u, v$ of $G$ are adjacent if $\{u,v\}$ is an edge of $G$. A set of vertices is independent if no two of its elements are adjacent. Let $\alpha(G)$ be the maximal cardinality of an independent set of vertices of $G$. One can see that $\dim(\D(G))=\alpha(G)-1$. If $U\subseteq [n]$, then $G\setminus U$ denote the subgraph of $G$ on $V(G)\setminus U$ whose edges are precisely the edges of $G$ with both vertices in $V(G)\setminus U$. The set of adjacent vertices of a vertex $u\in V(G)$ is denoted by $N(u)$ and $\deg(x)=|N(x)|$ is called the degree of $x$. $G$ is called complete if $\deg(u)=|V(G)|-1$ for all $u\in V(G)$. For simplicity, we often write $i \in G$ (resp. $ij \in E(G)$) instead of $i \in V(G)$ (resp. $\{i,j\} \in E(G)$). A graph $G$ is called a Cohen-Macaulay graph (resp. Buchsbaum) if so is its independence complex. 
%--------------------------------------------------------------------------------------------------------------------------------------------------
\section{Cohen-Macaulayness of the second power of an edge ideal}
Throughout this section, let $G$ be a graph on $[n]$ and $I(G)$ its edge ideal in $S=k[x_1,\cdots,x_n]$. Let $\D=\Delta(G)$ be the independence complex of $G$. First, we shall prove the following lemma.

\begin{lem}[H, page 98]\label{lema2.4} Let  $\Delta, \Gamma$ be Cohen-Macaulay simplicial complexes of dimension $d$ on $[n]$. Then, the following assertions hold true:
\begin{itemize}
\item[(i)] If  $\Delta\cap \Gamma$ is Cohen-Macaulay of dimension $d$, then $\Delta\cup \Gamma$ is Cohen-Macaulay of dimension $d$.
\item[(ii)] Assume $\dim(\Delta\cap \Gamma)=d-1$. Then, $\Delta\cup \Gamma$ is Cohen-Macaulay if and only if  $\Delta\cap \Gamma$ is Cohen-Macaulay.
\end{itemize}
 \end{lem}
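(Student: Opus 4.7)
The plan is to use the Mayer--Vietoris-type short exact sequence
$$0 \to S/I_{\Delta\cup\Gamma} \to S/I_{\Delta}\oplus S/I_{\Gamma} \to S/I_{\Delta\cap\Gamma} \to 0,$$
which comes from the identities $I_{\Delta\cup\Gamma}=I_{\Delta}\cap I_{\Gamma}$ and $I_{\Delta\cap\Gamma}=I_{\Delta}+I_{\Gamma}$, and then pass to the long exact sequence of local cohomology modules with respect to $\m$. Recall that a Stanley--Reisner ring $S/I_{\Sigma}$ with $\dim\Sigma=e$ is Cohen--Macaulay precisely when $H^{i}_{\m}(S/I_{\Sigma})=0$ for all $i<e+1$.

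For (i), all three of $\Delta,\Gamma,\Delta\cap\Gamma$ are Cohen--Macaulay of dimension $d$, so their local cohomologies vanish in every degree $i<d+1$. The long exact sequence immediately yields $H^{i}_{\m}(S/I_{\Delta\cup\Gamma})=0$ for $i<d+1$, and since every facet of $\Delta\cup\Gamma$ must be a facet of $\Delta$ or of $\Gamma$, the union is pure of dimension $d$, completing the argument. For the $(\Leftarrow)$ direction of (ii), $\dim(S/I_{\Delta\cap\Gamma})=d$ and Cohen--Macaulayness forces $H^{i}_{\m}(S/I_{\Delta\cap\Gamma})=0$ for $i<d$; together with the vanishing of $H^{i}_{\m}(S/I_{\Delta})$ and $H^{i}_{\m}(S/I_{\Gamma})$ for $i<d+1$, the long exact sequence delivers $H^{i}_{\m}(S/I_{\Delta\cup\Gamma})=0$ for $i<d+1$, and purity of $\Delta\cup\Gamma$ follows as before.

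For the $(\Rightarrow)$ direction of (ii), Cohen--Macaulayness of $S/I_{\Delta\cup\Gamma}$ gives $H^{i+1}_{\m}(S/I_{\Delta\cup\Gamma})=0$ for $i<d$, which combined with the vanishing of $H^{i}_{\m}$ on $S/I_{\Delta}\oplus S/I_{\Gamma}$ for $i<d+1$ forces $H^{i}_{\m}(S/I_{\Delta\cap\Gamma})=0$ for $i<d$; since $\dim(S/I_{\Delta\cap\Gamma})=d$, this is exactly the Cohen--Macaulay condition. Purity of $\Delta\cap\Gamma$ then follows from the general principle that any Cohen--Macaulay ring is unmixed, so the minimal primes of $I_{\Delta\cap\Gamma}$ all have the same height, forcing all facets of $\Delta\cap\Gamma$ to have dimension $d-1$. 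The main obstacle is simply bookkeeping: one has to keep careful track of the shift between combinatorial dimension of a complex and Krull dimension of its Stanley--Reisner ring, and verify that the indices in the long exact sequence align so that the intended terms vanish in each direction, with the asymmetry between (i) and (ii) driven entirely by whether $\dim(\Delta\cap\Gamma)$ equals $d$ or drops by one.
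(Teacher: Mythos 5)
Your argument is correct, and it takes a different route from the paper's in execution. The paper observes that links distribute over unions and intersections, namely $\lk_{\Delta\cup\Gamma}(F)=\lk_{\Delta}(F)\cup\lk_{\Gamma}(F)$ and $\lk_{\Delta\cap\Gamma}(F)=\lk_{\Delta}(F)\cap\lk_{\Gamma}(F)$ for all $F$, and then applies the topological Mayer--Vietoris sequence to each link together with Reisner's criterion, so the dimension bookkeeping is done facewise in reduced simplicial homology. You instead work globally with the single short exact sequence $0\to S/I_{\Delta\cup\Gamma}\to S/I_{\Delta}\oplus S/I_{\Gamma}\to S/I_{\Delta\cap\Gamma}\to 0$ (the algebraic Mayer--Vietoris sequence coming from $I_{\Delta\cup\Gamma}=I_{\Delta}\cap I_{\Gamma}$ and $I_{\Delta\cap\Gamma}=I_{\Delta}+I_{\Gamma}$) and the long exact sequence of local cohomology, using the depth characterization of Cohen--Macaulayness. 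The two arguments are cousins --- by Hochster's formula the local cohomology of a Stanley--Reisner ring is assembled from reduced cohomology of links --- but your version treats all links at once and makes the asymmetry between (i) and (ii) transparent as a one-degree shift in the long exact sequence, whereas the paper's version stays at the combinatorial--topological level and never invokes local cohomology. Your index checks are right in all three implications (in particular the use of $H^{i+1}_{\m}(S/I_{\Delta\cup\Gamma})$ on the right-hand side in the forward direction of (ii)), the purity claims are justified (facets of the union are facets of $\Delta$ or of $\Gamma$, and unmixedness of a Cohen--Macaulay ring gives purity of $\Delta\cap\Gamma$), and since the argument is uniform in the coefficient field it establishes Cohen--Macaulayness over every field, as the paper's definition requires.
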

 \begin{proof} Note that 
\begin{eqnarray*}
 \lk_{\Delta\cup \Gamma}(F)&=& \lk_{\Delta}(F)\cup  \lk_{\Gamma}(F)\\
\lk_{\Delta\cap \Gamma}(F)&=& \lk_{\Delta}(F)\cap  \lk_{\Gamma}(F)
 \end{eqnarray*}
for all $F\subseteq [n]$. Using the Mayer-Vietoris sequence and the Reisner's criterion for Cohen-Macaulayness of a simplicial complex (see \rm{[BH, Corollary 5.3.9]}), we obtain the conclusion.
\end{proof}

For each $pq\in E(G)$, let $G_{pq}=G\setminus( N(p)\cup N(q))$. We say that $G_{pq}$ is the localization of $G$ at edge $pq$. 
Suppose $U, V\subseteq [n]$ and $U\cap V=\emptyset$. Let $\Gamma$ (resp. $\Lambda $) be a simplicial complex on $U$ (resp. $V$). Then the simplicial join of $\Gamma$ and $\Lambda$, denoted by $\Gamma * \Lambda$, is defined by $\{F\cup L~|~F\in \Gamma, L\in\Lambda\}$. It is a simplicial complex on $U\cup V$. 
Our first main result is:

\begin{thm} \label{Main1} Let $I(G)$ be the edge ideal of a graph $G$. Let $\D=\Delta(G)$ be the independence complex of $G$. The following conditions are equivalent:
\begin{itemize}
\item [(i)] $S/I(G)^{(2)}$ is Cohen-Macaulay.
\item[(ii)] $\D$ is Cohen-Macaulay and $\st_{\Delta}(p) \cup \st_{\Delta}(q)$ is Cohen-Macaulay for all $pq\in E(G)$.
\item[(iii)] $G$ is a Cohen-Macaulay graph and for all edge $pq$, $G_{pq}$ is Cohen-Macaulay and $\alpha(G_{pq})=\alpha(G)-1$. When this is the case, $G$ is called \em{special Cohen-Macaulay}.
\end{itemize}
\end{thm}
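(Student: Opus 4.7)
The strategy is a three-step reduction: first apply Takayama's formula (Theorem~\ref{T}) together with Lemma~\ref{degcomplex} to translate the Cohen-Macaulayness of $S/I(G)^{(2)}$ into the Cohen-Macaulayness of every degree complex $\D_{\a}(I(G)^{(2)})$; second, enumerate these degree complexes explicitly using the edge-ideal structure; and third, match the surviving non-automatic conditions with those in (ii) and (iii).

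For the enumeration, since $I(G)$ is squarefree, the $P_F$-primary component of $I(G)^{(2)}$ is $P_F^2$, so Lemma~\ref{degcomplex} gives, for $\a\in\NN^n$, that the facets of $\D_{\a}(I(G)^{(2)})$ are exactly those facets $F$ of $\D$ with $\sum_{j\notin F}a_j\le 1$. A short case check on the total weight reduces the list, up to equality as complexes, to $\D$ itself (for $\a=\0$ or $\a=\e_u$), $\st_{\D}(u)$ (for $\a=2\e_u$), and $\st_{\D}(u)\cup\st_{\D}(v)$ for $u\ne v$ (for $\a=\e_u+\e_v$). The degree complexes coming from $\a$ with some negative entries are the corresponding complexes computed inside the link $\lk_\D(G_\a)$.

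For (i) $\Leftrightarrow$ (ii): the forward direction is by evaluation -- Takayama's formula at $\a=\0$ forces $\D$ to be Cohen-Macaulay, and at $\a=\e_p+\e_q$ for $pq\in E(G)$ forces $\st_{\D}(p)\cup\st_{\D}(q)$ to be Cohen-Macaulay. Conversely, assuming (ii), each $\st_\D(u)$ is a cone over $\lk_\D(u)$, so Cohen-Macaulay from $\D$ being so. For $pq\notin E(G)$ the flag property of $\D(G)$ gives $\st_{\D}(p)\cap\st_{\D}(q)=\st_{\D}(\{p,q\})$, itself a cone, so Lemma~\ref{lema2.4}(i) makes $\st_{\D}(p)\cup\st_{\D}(q)$ Cohen-Macaulay. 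Finally, the ``localized'' degree complexes unpack via the identity $\lk_{\st_{\D}(p)\cup\st_{\D}(q)}(F)=\st_{\lk_\D(F)}(p)\cup\st_{\lk_\D(F)}(q)$ (for $p,q\notin F$) into complexes of the same three types inside $\lk_\D(F)$, so Cohen-Macaulayness propagates to every degree complex.

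For (ii) $\Leftrightarrow$ (iii), the equivalence $\D$ Cohen-Macaulay $\Leftrightarrow$ $G$ Cohen-Macaulay holds by definition. For $pq\in E(G)$, a direct check gives $\st_{\D}(p)\cap\st_{\D}(q)=\D(G_{pq})$: the condition that $A\cup\{p\}$ and $A\cup\{q\}$ are both independent in $G$ is exactly the condition that $A$ is independent and $A\subseteq V(G)\setminus(N(p)\cup N(q))$. Since any maximum independent set of $G_{pq}$ avoids $N(p)$ and so can be enlarged by $p$, one has $\alpha(G_{pq})\le\alpha(G)-1$. With $\st_\D(p)$ and $\st_\D(q)$ already Cohen-Macaulay cones of dimension $\alpha(G)-1$, Lemma~\ref{lema2.4}(ii) applies exactly when $\alpha(G_{pq})=\alpha(G)-1$, and then reduces the Cohen-Macaulayness of $\st_{\D}(p)\cup\st_{\D}(q)$ to that of $\D(G_{pq})$; on the other hand, a Reisner calculation at a vertex of the intersection (using the Mayer-Vietoris decomposition of $\lk_{\st_{\D}(p)\cup\st_{\D}(q)}(\{v\})$ into the union of two disjoint cones $p * \lk_\D(\{v,p\})$ and $q * \lk_\D(\{v,q\})$) shows that the strict inequality $\alpha(G_{pq})<\alpha(G)-1$ produces a nonvanishing reduced homology in the link, contradicting Cohen-Macaulayness. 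Hence (ii) is equivalent to $G$ being Cohen-Macaulay, $\alpha(G_{pq})=\alpha(G)-1$, and $G_{pq}$ being Cohen-Macaulay for every edge $pq$, which is exactly (iii).

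The main obstacle is the complete enumeration of degree complexes for $\a$ with negative entries together with the verification that the list in (ii) is closed under taking links; this is what makes the backward implication (ii) $\Rightarrow$ (i) truly recover all degree complexes. A secondary delicate point is the ruling out of $\alpha(G_{pq})<\alpha(G)-1$ in the step (ii) $\Rightarrow$ (iii), which is not a direct Mayer-Vietoris step on the union but requires examining a link inside it. Once these two structural points are in hand, the rest is routine bookkeeping via Lemma~\ref{lema2.4}.
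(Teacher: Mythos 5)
Your reduction via Takayama's formula and Lemma~\ref{degcomplex} is the right framework (it is how the paper proceeds, through \cite[Theorem 2.1]{MT2}), but your enumeration of the degree complexes in Step 2 is wrong, and this breaks the implication (ii)$\Rightarrow$(i). For $I(G)^{(2)}$ the relevant exponents are all $\a\in\{0,1\}^n$ (since $\rho_j\le 2$), not just those of total weight at most $2$: for $\a$ with support $V$, $|V|\ge 3$, the degree complex is the complex $\D_V$ whose facets are the facets of $\D$ containing at least $|V|-1$ vertices of $V$, and this is in general \emph{not} equal to $\D$, to a star $\st_{\D}(u)$, or to a union of two stars. For instance, if $G$ is the disjoint union of the three edges $1a$, $2b$, $3c$ and $\a=\e_1+\e_2+\e_3$, then $\D_{\a}$ has facets $\{1,2,3\},\{1,2,c\},\{1,b,3\},\{a,2,3\}$, which is none of your three types. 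Proving that all these $\D_V$ are Cohen--Macaulay under hypothesis (ii) is precisely the bulk of the paper's proof of (ii)$\Rightarrow$(i): one splits according to the induced graph $G[V]$ (triangle or two disjoint edges gives $\D_V=\emptyset$; $V$ independent gives $\D_V=\bigcup_{i\in V}\st_{\D}(V\setminus\{i\})$, handled by induction with Lemma~\ref{lema2.4}(i) using the flag identity $\st_{\D}(V\setminus\{i\})\cap\st_{\D}(V\setminus\{j\})=\st_{\D}(V)$; one edge plus isolated vertices gives a cone over a link of $\st_{\D}(p)\cup\st_{\D}(q)$; a star plus isolated vertices gives a single $\st_{\D}(W)$). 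None of this is covered by your remark about negative entries, which only converts those exponents into links of the nonnegative ones; the links of the complexes $\D_V$ are again complexes of $\D_{V'}$-type inside $\lk_{\D}(F)$, so the whole family with $|V|\ge 3$ must be dealt with. As it stands, your (ii)$\Rightarrow$(i) is a genuine gap.

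A secondary, fixable point concerns your argument that (ii) forces $\alpha(G_{pq})=\alpha(G)-1$. You take an arbitrary vertex $v$ of $\st_{\D}(p)\cap\st_{\D}(q)$ and claim $\lk_{\st_{\D}(p)\cup\st_{\D}(q)}(v)$ is a union of two \emph{disjoint} cones; that is false in general, since the two cones meet in $\st_{\lk_{\D}(v)}(p)\cap\st_{\lk_{\D}(v)}(q)$, which is $\{\emptyset\}$ only when $\{v\}$ is a facet of the intersection. The argument does work if you instead take $F$ to be a \emph{facet} of $\st_{\D}(p)\cap\st_{\D}(q)$ (possibly $F=\emptyset$): then $\lk_{\st_{\D}(p)\cup\st_{\D}(q)}(F)$ is a union of two nonempty cones meeting only in $\emptyset$, hence disconnected, and when $\dim F\le\dim\D-2$ this contradicts Reisner's criterion for the union; this recovers, in one step, what the paper obtains by induction on $\dim\D$ via a connectedness argument. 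The remaining parts of your (ii)$\Leftrightarrow$(iii) (the identification $\st_{\D}(p)\cap\st_{\D}(q)=\D(G_{pq})$, the inequality $\alpha(G_{pq})\le\alpha(G)-1$, and the use of Lemma~\ref{lema2.4}(ii)) agree with the paper and are fine.
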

\begin{proof} (i)$\Longrightarrow$(ii):\quad By \cite[Theorem 2.1]{MT2}, $\D_{\0}(I(G)^{(2)})=\D$ and $$\Delta_{\a}(I(G)^{(2)})=\st_{\Delta}(p)\cup \st_{\Delta}(q)$$ are Cohen-Macaulay, where $\a=\e_p+\e_q $ and ${\bold e}_p$ denotes the $p$-th unit vector for all $pq\in E(G)$ if $S/I(G)^{(2)}$ is Cohen-Macaulay. 

(ii)$\Longrightarrow$(i):\quad By \cite[Theorem 2.1]{MT2} to check, that for all $V\subseteq [n]$ with $2\leq |V|\leq \dim(\D)+1$ the complex $\D_V$ is Cohen-Macaulay. Here, $\D_V$ is the simplicial complex, whose facets are the facets of $\D$ having at least $|V|-1$ vertices in $V$. 

Let $G[V]$ be the induced subgraph of $G$ on the set $V$. If $G[V]$ has an induced subgraph which is a triangle, say $\{ij,jk,ki\}$, or  a pair of disjoint edges, say $\{ij,kl\}$, then  $x_ix_jx_k\in I(G)^{(2)}$ or $x_ix_jx_kx_l\in I(G)^{(2)}$, so $\Delta_V = \emptyset$ by its definition. Therefore, we may assume that  $G[V]$  has no induced subgraph which forms a triangle or a pair of disjoint edges. We have several cases of $G[V]$ as follows.

{\bf Case 1}: $G[V]$ consists of isolated vertices. Then $V\in\D$. Without loss of  generality, let $V=\{1,\ldots,m\}$. Therefore, by its definition, 
$$\Delta_V=\bigcup_{i=1}^m \st_{\Delta}(V\setminus\{i\}),$$
where $\st_{\Delta}(V\setminus\{i\})\ne \emptyset$ for all $i$. Note that
 $$\st_{\Delta}(V\setminus\{i\})=\seq{V\setminus\{i\}}*\lk_{\Delta}(V\setminus\{i\}).$$
On the other hand, for all $i\ne j\in V$,
$$\st_{\Delta}(V\setminus\{i\})\cap \st_{\Delta}(V\setminus\{j\}) =\st_{\Delta}(V)=\seq{V}*\lk_{\Delta}(V).$$
In fact, we always have $\st_{\Delta}(V)\subseteq \st_{\Delta}(V\setminus\{i\})\cap \st_{\Delta}(V\setminus\{j\})$. Let $F\in\st_{\Delta}(V\setminus\{i\})\cap \st_{\Delta}(V\setminus\{j\})$ then $F\cup(V\setminus\{i\})\in\D, F\cup(V\setminus\{j\})\in \D$. Since $\D$ is the independence complex of $G$, $F\cup V\in\D$. Then $F\in\st_{\Delta}(V)$. Thus, $\st_{\Delta}(V\setminus\{i\})\cap \st_{\Delta}(V\setminus\{j\})$ is Cohen-Macaulay simplicial complex for all $i\ne j$ (see \rm{[BH, Exerices 5.1.21]}). 

Let $$\Gamma_t =\bigcup_{i=1}^t\st_{\Delta}({V\setminus\{i\}}),$$ for all $t\le m$. We want to show that $\Gamma_t $ is Cohen-Macaulay by induction on $t$. If $t=1$ then the assertion is true. If $m>t>1$, we have  $$\Gamma_t\cap \st_{\Delta}(V\setminus\{t+1\}) =\st_{\Delta}(V) =\seq{V}*\lk_{\Delta}(V)$$ is Cohen-Macaulay. By induction and Lemma \ref{lema2.4}(i), $\Gamma_{t+1}$ is Cohen-Macaulay. In particular, $\Delta_V=\Gamma_{m}$ is Cohen-Macaulay.

{\bf Case 2}: Let  $|V|\ge 3$ and assume that $G[V]$  consists of one edge of $G$, say $pq$, and isolated vertices otherwise. Let $W=\{\text{isolated vertices of $V$}\}$. Then $W\cup\{p\}, W\cup \{q\}\in\D$ and  $$\Delta_V = \st_{\Delta}(W\cup p) \cup \st_{\Delta}(W\cup  q) = \seq{W}*\lk_{\st_{\Delta}(p)\cup \st_{\Delta}(q)} (W).$$
Since our assumption and \rm{[BH, Exerices 5.1.21]}, $\Delta_V$ is Cohen-Macaulay.

{\bf Case 3}: Let  $|V|\ge 3$ and assume that $G[V]$ consists of one star of a vertex, say $x$, which contains at least two edges and isolated vertices otherwise. Let $W=N(x)\cup \{\text{isolated vertices}\}$, so
  $$\Delta_V = \st_{\Delta}(W)  = \seq{W}*\lk_{\Delta}(W),$$
 which is Cohen-Macaulay.

{\bf Case 4}: If  $|V|=2$ and $G[V]$  consists of one edge of $G$, say $pq$. Then $\D_V=\st_{\Delta}(p) \cup \st_{\Delta}(q)$ is Cohen-Macaulay by our assumption.

Using the results just obtained we see that $\D_V$ is Cohen-Macaulay for all $V\subseteq [n]$ with $2\leq |V|$, which implies our assertions.

 (ii)$\Longrightarrow$(iii):\quad It is easily seen that the independence complex of $G_{pq}$ is 
$$ \Delta(G_{pq})=\st_{\Delta} (p)\cap \st_{\Delta}(q),$$
for all $pq\in E(G)$. In fact, if $F\in \Delta(G_{pq})$, then $F\cup\{p\}\in\D$ and $F\cup\{q\}\in\D$ (by definition of the independence complex). Hence $\Delta(G_{pq})\subseteq\st_{\Delta} (p)\cap \st_{\Delta}(q)$. On the other hand, one can see that if $F\in\st_{\Delta} (p)\cap \st_{\Delta}(q)$ then $F\cup\{p\}\in\D$ and $F\cup\{q\}\in\D$. Therefore, $N(p)\cap F=N(q)\cap F=\emptyset$ and $F\in\D$. It implies $F\in\D(G_{pq})$.

Fix $pq\in E(G)$. Using Lemma ~\ref{lema2.4}(ii), it is enough to show that $\st_{\Delta} (p)\cap \st_{\Delta}(q)$ is a simplicial complex of  dimension $\dim(\D)-1$. We will prove it by induction on $\dim(\D)$. If $\dim(\D) = 0$ then $\st_{\D}(p)\cap\st_{\D}(q)=\emptyset$ and our conclusion is trivial.  If $\dim(\D) > 0$ then 
$\widetilde{H}_{0} (\st_{\Delta} (p)\cup \st_{\Delta}(q);k)=\widetilde{H}_{-1} (\st_{\D}(p);k)=\widetilde{H}_{-1} (\st_{\D}(q);k)= (0)$ by our assumption. If $\st_{\Delta} (p)\cap \st_{\Delta}(q) =\{\emptyset\}$ then $\st_{\Delta} (p)\cup \st_{\Delta}(q)$ is not connected, a contradiction. Therefore $\st_{\Delta} (p)\cap \st_{\Delta}(q)\ne\{\emptyset\}$. Let $x\in \st_{\Delta} (p)\cap \st_{\Delta}(q)$ and $\Gamma = \lk_{\D}(x)$. Then $\Gamma$ and  $\st_{\Gamma}(p)\cup\st_{\Gamma}(q)=\lk_{\st_{\Delta} (p)\cup \st_{\Delta}(q)}(x)$ are Cohen-Macaulay of dimension $\dim(\D)-1$. By induction on $\dim(\D)$, $\st_{\Gamma}(p)\cap\st_{\Gamma}(q)=\lk_{\st_{\Delta} (p)\cap \st_{\Delta}(q)}(x)$ is a simplicial complex of  dimension $\dim(\Gamma)-1$. Then $\st_{\Delta} (p)\cap \st_{\Delta}(q)$ is a simplicial complex of  dimension $\dim(\Gamma)=\dim(\D)-1$ as required. 

 (iii)$\Longrightarrow$(ii):\quad The proof is straightforward from Lemma ~\ref{lema2.4}.
\end{proof}

\begin{cor}\label{Cor3}
Let $I(G)$ be the edge ideal of a graph $G$. Then, $S/I(G)^{2}$ is Cohen-Macaulay if and only if $G$ is special Cohen-Macaulay and has no triangles (i.e it has no subgraph which forms a triangle).
\end{cor}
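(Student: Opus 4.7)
The plan is to reduce the statement to Theorem~\ref{Main1} via the standard equality
$$I(G)^{(2)} = I(G)^{2} \quad \Longleftrightarrow \quad G \text{ is triangle-free}.$$
Once this is granted, the corollary becomes a reformulation of Theorem~\ref{Main1}: in the triangle-free case the symbolic and ordinary squares coincide, so Cohen-Macaulayness of either one is, by Theorem~\ref{Main1}, equivalent to $G$ being special Cohen-Macaulay.

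For the ``only if'' direction I would argue as follows. Assume $S/I(G)^{2}$ is Cohen-Macaulay, so that $I(G)^{2}$ is unmixed and has no embedded associated primes. Its minimal primes are exactly the $P_{F}$ with $F \in \F(\Delta)$; because each facet $F$ is a maximal independent set of $G$, every $x_{i}$ with $i\notin F$ has a neighbour $j\in F$, and $x_{j}$ is a unit after localizing at $P_{F}$, so $I(G)S_{P_{F}}=P_{F}S_{P_{F}}$ and the $P_{F}$-primary component of $I(G)^{2}$ equals $P_{F}^{2}$. Absence of embedded primes therefore forces
$$I(G)^{2} = \bigcap_{F\in\F(\Delta)}P_{F}^{2} = I(G)^{(2)}.$$
If $G$ contained a triangle $\{a,b,c\}$, then $x_{a}x_{b}x_{c}$ would lie in $I(G)^{(2)}$ (every minimal vertex cover meets any triangle in at least two vertices) but not in $I(G)^{2}$ (all minimal generators of which have degree four), contradicting the displayed equality. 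Hence $G$ is triangle-free, $S/I(G)^{(2)}=S/I(G)^{2}$ is Cohen-Macaulay, and the implication (i)$\Rightarrow$(iii) of Theorem~\ref{Main1} delivers that $G$ is special Cohen-Macaulay.

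For the ``if'' direction, assume $G$ is special Cohen-Macaulay and triangle-free. The plan is to verify $I(G)^{(2)}\subseteq I(G)^{2}$ directly: every minimal monomial generator $m$ of $I(G)^{(2)}$ satisfies that, for every facet $F$ of $\Delta$, the sum of the $x_{j}$-exponents of $m$ over $j\notin F$ is at least two. A short case analysis on $\supp(m)$ and its induced subgraph shows that the only obstruction to $m$ being divisible by a product of two edges is $\supp(m)$ forming a triangle, which triangle-freeness rules out; thus $m \in I(G)^{2}$ and the two ideals coincide. The implication (iii)$\Rightarrow$(i) of Theorem~\ref{Main1} then supplies Cohen-Macaulayness of $S/I(G)^{2}$. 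I expect this combinatorial step---the reverse inclusion $I(G)^{(2)}\subseteq I(G)^{2}$ for triangle-free $G$---to be the principal technical point; an alternative route is to observe that under triangle-freeness the Takayama degree complexes $\Delta_{\a}(I(G)^{2})$ and $\Delta_{\a}(I(G)^{(2)})$ agree for every $\a\in\NN^{n}$, whence Theorem~\ref{T} gives the two quotient rings identical local cohomology and hence identical Cohen-Macaulay status.
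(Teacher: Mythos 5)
Your proposal is correct, and its skeleton is the same as the paper's: reduce to the equivalence ``$S/I(G)^{2}$ is Cohen--Macaulay $\Leftrightarrow$ $S/I(G)^{(2)}$ is Cohen--Macaulay and $I(G)^{2}=I(G)^{(2)}$'' and then invoke Theorem~\ref{Main1} for the symbolic square. The difference lies in what is proved versus cited: the paper disposes of the equality $I(G)^{2}=I(G)^{(2)}\Leftrightarrow G$ triangle-free by quoting \cite[Lemma 5.8, Theorem 5.9]{SVV}, and it leaves implicit the (standard) point that Cohen--Macaulayness of $S/I(G)^{2}$ rules out embedded primes and hence forces $I(G)^{2}=I(G)^{(2)}$; you instead prove both ingredients directly, and your localization argument at the primes $P_F$, $F\in\F(\D)$, together with the degree obstruction $x_ax_bx_c\in I(G)^{(2)}\setminus I(G)^{2}$ for a triangle $\{a,b,c\}$, is accurate. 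The only place where you are thinner than a full proof is the ``short case analysis'' establishing $I(G)^{(2)}\subseteq I(G)^{2}$ for triangle-free $G$; it does go through and is worth recording: for a monomial $m\in I(G)^{(2)}$ with $W=\supp(m)$, the induced graph $G[W]$ cannot be edgeless (extend $W$ to a facet $F$ of $\D$ to contradict $m\in P_F^{2}$); if $G[W]$ contains two disjoint edges then $m\in I(G)^{2}$ immediately; otherwise every two edges of $G[W]$ meet, and a triangle-free graph with this property is a star, so either $G[W]$ is a single edge $uv$ (extending $W\setminus\{v\}$, resp.\ $W\setminus\{u\}$, to a facet forces $a_u\ge 2$ and $a_v\ge 2$, whence $(x_ux_v)^{2}\mid m$) or a star with center $u$ and at least two leaves (extending $W\setminus\{u\}$ to a facet forces $a_u\ge 2$, whence $(x_ux_{v_1})(x_ux_{v_2})\mid m$). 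So your route buys a self-contained, elementary proof of the \cite{SVV} ingredient at the cost of that case analysis, while the paper's proof is shorter because it outsources it; your closing alternative via comparing the degree complexes of $I(G)^{2}$ and $I(G)^{(2)}$ is not really a shortcut, since verifying that they coincide (and that Lemma~\ref{degcomplex} applies to $I(G)^{2}$) amounts to the same combinatorial fact.
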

\begin{proof}
Note that $S/I(G)^{2}$ is Cohen-Macaulay if and only if $S/I(G)^{(2)}$ is Cohen-Macaulay and $I(G)^2=I(G)^{(2)}$. By Theorem ~\ref{Main1} and \cite[Lemma 5.8, Theorem 5.9]{SVV}, the assertion follows. 
\end{proof}

We end this part with a remark concerning Theorem \ref{Main1} and Corollary \ref{Cor3} as follows.

\begin{rem} In light of results of Trung and Terai ([TT]) and of Rinaldo, Terai and Yoshida ([RTY]), two interesting questions for
further study arise:

(1) Is there a characterization of the Cohen-Macaulayness of $S/I(G)^{(2)}$ in terms of the graph $G$?

(2) Does the property of $S/I(G)^{(2)}$ being Cohen-Macaulay depend on the characteristic of $k$?

Theorem \ref{Main1} hints towards an affirmative answer to the first question. Yet, it is fair to say that we are still far from a general solution to this question. On the other hand, our computations suggest that in Theorem 2.3(iii) the assumption that $G$ is Cohen-Macaulay is superfluous. Indeed, we only need the condition $G_{pq}$ is Cohen-Macaulay and $\alpha(G_{pq})=\alpha(G)-1$ for all edge $pq$. It is certainly true if $G$ is a triangle-free graph, see \cite{HT}. Therefore, in Corollary \ref{Cor3}, the Cohen-Macaulay property of $G$ can be also omitted if one add more a condition that $G$ has no triangles. 

For the second question, the answer is NO in the class of bipartite graphs (see Proposition \ref{Ex1}) and in the class of triangle-free graphs (see \cite{HT}) but the other cases remain open.
\end{rem}

%----------------------------------------------------------------------------------------------------------------------------------------------------
\section{generalized Cohen-Macaulayness of the second symbolic power of a Stanley-Reisner ideal}
 Let $I$ be  a monomial ideal in $S=k[x_1,\cdots,x_n]$. Using Theorem ~\ref{T}, Takayama gave some combinatorial characterizations of the generalized Cohen-Macaulay property for $S/I$ (see \rm{[T]}). Later on, in\cite{MT1} and \cite{MT2}, the second author and N. V. Trung succeeded in using Takayama's formula to characterize the Cohen-Macaulayness of symbolic powers of $I$ in terms of its primary decomposition. Similarly, we will give here another version of  the generalized Cohen-Macaulayness of $S/I$ which can be checked by using standard techniques of linear programming. We can now formulate our result.

\begin{prop}\label{thm1.3} Let $I$ be an unmixed monomial ideal in $S$.   Then the following conditions are equivalent:
\begin{itemize}
\item[(i)] $S/I$ is generalized Cohen-Macaulay.
\item[(ii)] $\Delta_{\a}(I)$ is Buchsbaum for all $\a\in \NN^n$ (or for all $\a \in \NN^n$ with $a_j < \rho_j(I)$, $j = 1,\ldots,n$).
\item[(iii)] $L_{\Gamma}(I)=\emptyset$ for every non-Buchsbaum  subcomplex $\Gamma$ with $\mathcal{F}(\Gamma) \subseteq \mathcal{F}(\D)$.
\end{itemize}
\end{prop}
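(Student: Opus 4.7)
The plan is to use Takayama's formula (Theorem~\ref{T}) to translate the generalized Cohen--Macaulay property into a homological vanishing on the degree complexes $\Delta_\a(I)$, and then to recast this via Lemma~\ref{degcomplex} in terms of the sets $L_\Gamma(I)$. Throughout, let $d=\dim(S/I)=\dim\Delta+1$, where $\Delta$ is the simplicial complex with $\sqrt{I}=I_\Delta$ (the equality uses unmixedness of $I$), and recall that $S/I$ is generalized Cohen--Macaulay iff, for each $i<d$, the graded piece $H^i_\m(S/I)_\a$ is nonzero for only finitely many $\a\in\ZZ^n$.

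For (i)$\Leftrightarrow$(ii), the key technical input is the identity
\[
\Delta_\a(I)=\lk_{\Delta_{\a^+}(I)}(G_\a) \qquad (\a\in\ZZ^n),
\]
where $\a^+\in\NN^n$ is obtained by replacing each negative entry of $\a$ by $0$. This is immediate from the definition of the degree complex: membership of a face $F\supseteq G_\a$ is controlled only by coordinates outside $G_\a$, on which $\a$ and $\a^+$ agree. By Theorem~\ref{T}, only $\a$ with $G_\a\in\Delta$ and $a_j<\rho_j(I)$ contribute to local cohomology. For $\a\in\NN^n$ the bound $a_j<\rho_j(I)$ already forces the contributing $\a$ to be finite in number, so the finite-length condition is equivalent to the vanishing
\[
\widetilde H_{i-|G_\a|-1}(\Delta_\a(I);k)=0 \text{ for all } i<d,
\]
whenever $\a\in\ZZ^n\setminus\NN^n$ with $G_\a\in\Delta$ and $a_j<\rho_j(I)$ (since for fixed $\a^+$ and fixed nonempty $G_\a$, the entries on $G_\a$ may be taken arbitrarily negative without altering $\Delta_\a(I)$, yielding infinitely many $\a$ that share the same contribution). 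Substituting the link identity and reindexing $\b=\a^+\in\NN^n$, $F=G_\a\in\Delta_\b(I)\setminus\{\emptyset\}$, this becomes
\[
\widetilde H_j\bigl(\lk_{\Delta_\b(I)}(F);k\bigr)=0 \text{ for all } j<\dim\Delta-|F|,
\]
which, together with the purity of $\Delta_\b(I)$ supplied by Lemma~\ref{degcomplex}, is precisely the Reisner-type criterion recalled in Section~2 for $\Delta_\b(I)$ to be Buchsbaum. The extension from $\{\b\in\NN^n:b_j<\rho_j(I)\}$ to all of $\NN^n$ is immediate because truncating an entry $b_j\geq\rho_j(I)$ down to $\rho_j(I)-1$ does not alter $\Delta_\b(I)$, giving both formulations in (ii).

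The step (ii)$\Leftrightarrow$(iii) is then formal via Lemma~\ref{degcomplex}: a subcomplex $\Gamma$ with $\F(\Gamma)\subseteq\F(\Delta)$ is realized as $\Delta_\a(I)$ for some $\a\in\NN^n$ if and only if $L_\Gamma(I)\neq\emptyset$. Hence failure of (ii) is exactly the existence of a non-Buchsbaum $\Gamma$ with $L_\Gamma(I)\neq\emptyset$, which is the negation of (iii). The main obstacle I expect is verifying the link identity and the accompanying bookkeeping---in particular, checking that $\Delta_\a(I)=\emptyset$ in the degenerate case $G_\a\notin\Delta_{\a^+}(I)$, and matching the dimensions $\dim\lk_{\Delta_{\a^+}(I)}(G_\a)=\dim\Delta-|G_\a|$ so that the Reisner criterion applies cleanly at the link.
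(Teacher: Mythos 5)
Your overall route is the same as the paper's: Takayama's formula together with the identity $\D_\a(I)=\lk_{\D_{\a^+}(I)}(G_\a)$ (which the paper imports from the proof of \cite[Theorem 1.6]{MT2}) reduces (i) to the Schenzel-type criterion for the degree complexes, with purity supplied by Lemma~\ref{degcomplex}, and your (ii)$\Leftrightarrow$(iii) is the same formal application of Lemma~\ref{degcomplex}. The one place where you diverge contains a genuine error: the claim that truncating an entry $b_j\ge\rho_j(I)$ down to $\rho_j(I)-1$ does not alter $\D_\b(I)$ is false. After truncation the index $j$ becomes available as a witness for generators whose $j$-th exponent equals $\rho_j(I)$, so faces can be gained; for instance, for $I=(x_1x_2)\subset k[x_1,x_2]$ one has $\D_{(1,0)}(I)=\{\emptyset,\{1\}\}$ but $\D_{(0,0)}(I)=\{\emptyset,\{1\},\{2\}\}$. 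This step is load-bearing for you, because condition (iii) quantifies over all of $\NN^n$ (the sets $L_\Gamma(I)$ are not confined to the box $a_j<\rho_j(I)$), so you really do need the unrestricted form of (ii), and your bridge to it fails as written.

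The statement you want is true, but it needs a different observation. If $b_j\ge\rho_j(I)$ for all $j$ in a nonempty set $J$, then no $j\in J$ can ever serve as a witness, so $F\in\D_\b(I)$ if and only if $F\cup J\in\D_\b(I)$; hence (when $\D_\b(I)\ne\emptyset$) $\D_\b(I)=\langle J\rangle*\lk_{\D_\b(I)}(J)$, and $\lk_{\D_\b(I)}(J)$ depends only on the coordinates of $\b$ outside $J$, so it equals $\lk_{\D_{\b'}(I)}(J)$ where $\b'$ is obtained from $\b$ by zeroing the coordinates in $J$. If the restricted form of (ii) holds, $\D_{\b'}(I)$ is Buchsbaum, its link at the nonempty face $J$ is Cohen--Macaulay, and the join of a simplex with a Cohen--Macaulay complex is Cohen--Macaulay, so $\D_\b(I)$ is Buchsbaum. (The paper instead proves (i)$\Rightarrow$(ii) for arbitrary $\a\in\NN^n$ in one stroke, applying Theorem~\ref{T} in the shifted degree $\b$ with $b_i=-1$ on the chosen face $F$; note that essentially the same cone observation is silently needed there too when some $b_j\ge\rho_j(I)$, since in that range Theorem~\ref{T} only gives vanishing of $H^i_\m(S/I)_\b$ and not the homology identification, but then $\D_\b(I)$ is a cone with apex $j$ and hence acyclic.) Apart from this point, your bookkeeping --- the degenerate case $G_\a\notin\D_{\a^+}(I)$, the dimension count via Lemma~\ref{degcomplex}, and the finiteness argument obtained by pushing negative coordinates --- matches the paper's proof.
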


\begin{proof} (i) $\Rightarrow$ (ii): Fix $\a\in \NN^n$ such that $\D_{\a}(I)\not=\emptyset$. Let $F\in \D_{\a}(I)\setminus\{\emptyset\}$. Put ${\b}\in \mathbb Z^n$ such that $b_i=-1$ for $i\in F$ and $b_i=a_i$ for $i\notin F$, then $\lk_{\Delta_{\bf a}(I)}(F)=\Delta_{\bf b}(I)$. By Theorem ~\ref{T} and \rm{[T, Proposition 1]}, 
$$\widetilde{H}_{i-|G_{\bf b}|-1}(\Delta_{\bf b}(I);k) = H^i_{\m}(S/I)_{\bf b} =0,$$  
for all  $i<\dim(S/I)$. By Lemma ~\ref{degcomplex},
$$\dim\lk_{\Delta_{\bf a}(I)}(F) = \dim \Delta - |F| =\dim(S/I)-|G_{\bf b}|-1.$$
Then, $\widetilde{H}_j(\lk_{\Delta_{\bf a}}(F);k)=0$ for all $j<\dim (\Delta_{\bf a}(I))-|F|$ and $F\in \Delta_{\bf a}  \setminus\{\emptyset\}$, which is the desired conclusion.

(ii)$\Rightarrow$ (i): We only need to show that $\widetilde{H}_{i-|G_{\bf a}|-1} (\Delta_{\bf a}(I);k)=0$ for all ${\a}\in \ZZ^n, i<\dim(S/I), \emptyset\ne G_{\bf a}\in \Delta$. Assume $\D_{\a}(I)\ne \emptyset$. Let ${\bf b}\in \NN^n$ with $b_i=a_i$ if $a_i\ge 0$ and $b_i=0$ else. The same proof as of Theorem 1.6 (iii)$\Rightarrow$ (i) in \cite{MT2} shows that $G_{\bf a}\in \Delta_{\bf b}(I)$ and $\Delta_{\bf a}(I)=\lk_{\Delta_{\bf b}(I)}G_{\a}$. By our assumption and \cite[Theorem 1]{T}, $\Delta_{\bf b}(I)$  is generalized Cohen-Macaulay. This implies our assertion.

(ii)$\Longleftrightarrow$ (iii): By Lemma ~\ref{degcomplex}, $\Delta_{\bf a}(I) = \Gamma$ if and only if ${\bf a}\in L_{\Gamma}(I)$. This implies our assertion.
\end{proof}

\begin{exm} Let $S=k[x_1,\ldots,x_6]$ be a polynomial ring over an arbitrary field $k$. Let 
$$I=(x_4,x_5,x_6)^a\cap(x_1,x_5,x_6)^b\cap(x_1,x_2,x_6)^c\cap(x_1,x_2,x_3)^d,$$
be an ideal in $S$, where $a, b, c, d$ are non-negative integers and not all of them are zero. Then the following conditions are equivalent:
\begin{itemize}
\item[(i)] $S/I$ is generalized Cohen-Macaulay. 
\item[(ii)] $S/I$ is Cohen-Macaulay. 
\item[(iii)] $a, b, c, d$ do not satisfy all of three systems of constraints as follows:
\begin{equation}\label{D1}
         \begin{cases}
          a - 1\geq 0\\
          c - 1\geq 0\\
          a - b + c - 2\geq 0
          \end{cases}
\end{equation}

\begin{equation}\label{D2}
         \begin{cases}
          a - 1\geq 0\\
          d - 1\geq 0\\
          a - b + d - 2\geq 0\\
          a - c + d - 2\geq 0
          \end{cases}
\end{equation}

\begin{equation}\label{D3}
         \begin{cases}
          b - 1\geq 0\\
          d - 1\geq 0\\
          b - c + d - 2\geq 0
          \end{cases}
\end{equation}
\end {itemize}
\end{exm}
\begin{proof} Let $\D(I)$ denote the simplicial complex such that $\sqrt{I}$ is the Stanley-Reisner ideal of $\D(I)$. First, we can see that $\F(\D(I))$ is contained in
$$\{\{1,2, 3\},\{2,3,4\},\{3,4,5\},\{4,5,6\}\}.$$
Using Proposition ~\ref{thm1.3} and \rm[MT2, Theorem 1.6], to check the generalized Cohen-Macaulay (resp. Cohen-Macaulay) property of $S/I$, we only prove that $L_{\Gamma}(I)=\emptyset$, where $\Gamma$ is non Buchsbaum (resp. non Cohen-Macaulay) subcomplex of $\D(I)$ with $\F(\Gamma)\subseteq\F(\D(I))$. If $|\F(\Gamma)| = 1$ or $4$, then $\Gamma$ is a simplex or
 $$\F(\Gamma)=\F(\D(I))=\{\{1,2, 3\},\{2,3,4\},\{3,4,5\},\{4,5,6\}\}.$$ 
It implies that $\Gamma$ is the Cohen-Macaulay complex. If $|\F(\Gamma)| = 2$, say that $\F(\Gamma)=\{F,G\}$. Since $\Gamma$ is non Buchsbaum (resp. non Cohen-Macaulay), we have $|F\cap G|<2$. It means that $\F(\Gamma)$ is $\{\{1,2,3\},\{3,4,5\}\}$ or $\{\{1,2,3\},\{4,5,6\}\}$ or $\{\{2,3,4\},\{4,5,6\}\}$. If $|\F(\Gamma)| = 3$ and $\Gamma$ is non Buchsbaum (resp. non Cohen-Macaulay), then $\F(\Gamma)$ is $$\{\{1,2,3\},\{2,3,4\},\{4,5,6\}\}\text{ \quad or \quad}\{\{1,2,3\},\{3,4,5\},\{4,5,6\}\}.$$
This implies that (i) is equivalent to (ii). On the other hand, if the facets of $\Gamma$ are $\{\{1,2,3\},\{3,4,5\}\}$, then $L_{\Gamma}(I)=\emptyset$ is equivalent to the fact that the system of inequalities
\begin{equation}\label{D}
         \begin{cases}
          y_4 + y_5 + y_6 < a\\
          y_1 + y_2 + y_6 < c\\
          y_1 + y_5 + y_6 \geq b\\
          y_1 + y_2 + y_3 \geq d\\
          y_1\geq 0, y_2\geq 0, y_3\geq 0, y_4\geq 0, y_5\geq 0, y_6\geq 0
          \end{cases}
\end{equation}
has no integer solution. Using the Fourier-Motzkin elimination method which is a standard technique of integer programming in \cite{Sc} (also see a detailed example in \cite[Section 2]{GH}) to our system ~\eqref{D}, we will obtain that the system ~\eqref{D} has no integer solution if and only if $a, b, c, d$ do not satisfy the system of constraints \eqref{D1}. Similarly, if the facets of $\Gamma$ are $\{\{1,2,3\},\{4,5,6\}\}$ (resp. $\{\{2,3,4\},\{4,5,6\}\}$) then $a, b, c, d$ also do not satisfy the system of constraints ~\eqref{D2} (resp.  ~\eqref{D3}). Assume that the facets of $\Gamma$ are $\{\{1,2,3\},\{2,3,4\},\{4,5,6\}\}$. Then $L_{\Gamma}(I)=\emptyset$ if $a, b, c, d$ do not satisfy the system of constraints ~\eqref{D3}. Moreover, if they do not satisfy the system of constraints ~\eqref{D1}, then $L_{\Gamma}(I)=\emptyset$ in the case the facets of $\Gamma$ are $\{\{1,2,3\},\{3,4,5\},\{4,5,6\}\}$. This completes our proof.
\end{proof}

\begin{cor}\label{cor1.6} $S/I_{\Delta}^{(2)}$ is generalized Cohen-Macaulay if and only if $\Delta$ and $\bigcup_{i\in V}\st_{\D}(V\setminus\{i\})$ are Buchsbaum for all subsets $V\subseteq [n]$ with $1\le |V|\le \dim \Delta +1$.
\end{cor}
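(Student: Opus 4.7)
The plan is to apply Proposition~\ref{thm1.3} to the monomial ideal $I=I_\Delta^{(2)}=\bigcap_{F\in\F(\Delta)}P_F^2$, which is unmixed as soon as $\Delta$ is pure (a condition implicit once we talk about the generalized Cohen-Macaulay property for $S/I_\Delta^{(2)}$). Since every minimal generator of $P_F^2$ has degree at most $2$ in each variable, $\rho_j(I_\Delta^{(2)})=2$ for every $j$, so Proposition~\ref{thm1.3}(ii) reduces the statement to checking that $\Delta_\a(I_\Delta^{(2)})$ is Buchsbaum for every $\a\in\{0,1\}^n$.

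Fix such an $\a$ and write $V=\supp(\a)\subseteq [n]$. By Lemma~\ref{degcomplex},
$$\F(\Delta_\a(I_\Delta^{(2)}))=\{F\in\F(\Delta)\mid \x^\a\notin P_F^2\}.$$
Because $\x^\a=\prod_{j\in V}x_j$ is squarefree and $P_F^2$ is generated by products $x_ix_j$ with $i,j\notin F$, one has $\x^\a\in P_F^2$ precisely when $|V\setminus F|\ge 2$. Hence $F\in\F(\Delta_\a(I_\Delta^{(2)}))$ iff $V\setminus\{i\}\subseteq F$ for some $i\in V$, so that
$$\Delta_\a(I_\Delta^{(2)})=\begin{cases}\Delta & \text{if } V=\emptyset,\\ \bigcup_{i\in V}\st_\Delta(V\setminus\{i\}) & \text{if } V\ne\emptyset.\end{cases}$$

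The proof is then finished by a short case analysis on $|V|$. For $V=\emptyset$, and equally for $|V|=1$ (using $\st_\Delta(\emptyset)=\Delta$), the Buchsbaum requirement on $\Delta_\a(I_\Delta^{(2)})$ is simply that $\Delta$ is Buchsbaum. For $2\le|V|\le\dim\Delta+1$ it coincides with the union hypothesis in the corollary. For $|V|\ge\dim\Delta+3$ every set $V\setminus\{i\}$ has more than $\dim\Delta+1$ elements, so $\st_\Delta(V\setminus\{i\})=\emptyset$ and $\Delta_\a(I_\Delta^{(2)})=\emptyset$ is trivially Buchsbaum. It remains to treat $|V|=\dim\Delta+2$: setting $S=\{i\in V\mid V\setminus\{i\}\in\F(\Delta)\}$, one obtains $\Delta_\a(I_\Delta^{(2)})=\bigcup_{i\in S}\langle V\setminus\{i\}\rangle$, and since each such facet contains $V\setminus S$, this equals the join $\langle V\setminus S\rangle\ast\bigcup_{i\in S}\langle S\setminus\{i\}\rangle$. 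The second factor is the full boundary of the simplex on $S$ (a sphere when $|S|\ge 2$), a single point when $|S|=1$, or the empty complex when $|S|=0$; in every subcase the join is Cohen-Macaulay, hence Buchsbaum, so no extra condition needs to be imposed for $V$ of this size.

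The main obstacle is the explicit identification $\Delta_\a(I_\Delta^{(2)})=\bigcup_{i\in V}\st_\Delta(V\setminus\{i\})$ via Lemma~\ref{degcomplex}; once this is in hand, the case split is routine, and the key observation that the boundary range $|V|=\dim\Delta+2$ automatically produces a Cohen-Macaulay join explains why it is enough to impose Buchsbaumness only for $1\le|V|\le\dim\Delta+1$.
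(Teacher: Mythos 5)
Your proof is correct and follows essentially the same route as the paper: Proposition~\ref{thm1.3} together with Lemma~\ref{degcomplex} reduce everything to the degree complexes $\Delta_{\a}(I_{\Delta}^{(2)})$ for $\a\in\{0,1\}^n$, which are identified with $\Delta$ (for $|V|\le 1$) or with $\bigcup_{i\in V}\st_{\Delta}(V\setminus\{i\})$, and the boundary cases $|V|\ge\dim\Delta+2$ are discharged automatically. The only differences are cosmetic: you spell out why $|V|=\dim\Delta+2$ yields a Cohen-Macaulay join (which the paper merely asserts), whereas the paper justifies the unmixedness needed for Proposition~\ref{thm1.3} via \cite[Theorem 2.6]{HTT} (generalized Cohen-Macaulayness forces $\Delta$ to be Buchsbaum, hence pure) where you call purity ``implicit''; also, for $|S|=1$ the second factor of your join is the complex $\{\emptyset\}$ rather than a point, but the Cohen-Macaulay conclusion is unaffected.
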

\begin{proof} It should be noted that if $S/I_{\Delta}^{(2)}$ is generalized Cohen-Macaulay, then $\Delta$ is always Buchsbaum by \cite[Theorem 2.6]{HTT}. It implies that $\D$ is pure (i.e. $S/I_{\Delta}^{(2)}$ is unmixed). We have $\rho_j(I_{\D}^{(2)}) \le 2$ for all $j = 1,\ldots, n$. By Proposition ~\ref{thm1.3}, $I_{\D}^{(2)}$ is generalized Cohen-Macaulay if and only if $\D_{\a}(I_{\D}^{(2)})$  is Buchsbaum for all ${\a} \in  \{0, 1\}^n$. By Lemma ~\ref{degcomplex}, $\D_{\a}(I_{\D}^{(2)})=\D$ if $\a=\0$ or $\a=\e_i$ for some $1\le i\leq n$, and  $\D_{\a}(I_{\D}^{(2)}) = \bigcup_{i\in V}\st_{\D}(V\setminus\{i\})$ if ${\bf a}\in \{0, 1\}^n\setminus\{\0, \e_1, \ldots, \e_n\}$, where $V = \{i\in [n]~|~ a_i = 1\}$. Moreover, $\bigcup_{i\in V}\st_{\D}(V\setminus\{i\}) =\emptyset$ if $|V | \ge \dim\Delta + 3$, and $\bigcup_{i\in V}\st_{\D}(V\setminus\{i\})$ is Cohen-Macaulay if $|V | =\dim\Delta+2$. This is the desired conclusion by Proposition ~\ref{thm1.3}.
\end{proof}

A homogeneous ideal $I$ in $S$ (or $S/I$) is called $k$-Buchsbaum if $\m^kH^i_{\m}(S/I) = (0)$ for all $i< \dim(S/I) $ (see \cite{SV}). We always have:

\begin{center}
$k$-Buchsbaumness for some $k$ $\Rightarrow$ generalized Cohen-Macaulayness.
\end{center}

From Corollary ~\ref{cor1.6}, we will obtain some necessary conditions for 1-Buchsbaumness as follows. 
 
\begin{cor}\label{Cor1}
If $S/I_{\Delta}^{(2)}$ is 1-Buchsbaum, then $\Delta$ is Cohen-Macaulay and $\bigcup_{i\in V}\st_{\D}(V\setminus\{i\})$ is Buchsbaum for all subsets $V\subseteq [n]$ with $2\le |V|\le \dim \Delta +1$.
\end{cor}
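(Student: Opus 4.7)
The plan is to deduce the Cohen-Macaulay property of $\D$ from the $1$-Buchsbaum hypothesis by analyzing how multiplication by each variable $x_j$ acts on the graded pieces of $H^i_{\m}(S/I_{\D}^{(2)})$ in multidegrees $\0$ and $\e_j$.

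Since every $1$-Buchsbaum ring is generalized Cohen-Macaulay, Corollary~\ref{cor1.6} applies. This immediately yields the Buchsbaum property of each union $\bigcup_{i\in V}\st_{\D}(V\setminus\{i\})$ for $2\le|V|\le\dim\D+1$, and, taking $|V|=1$ (for which the union collapses to $\D$ itself), it also gives that $\D$ is Buchsbaum. It remains only to upgrade ``$\D$ Buchsbaum'' to ``$\D$ Cohen-Macaulay''. Buchsbaumness already handles every link $\lk_{\D}(F)$ with $F\neq\emptyset$, so by Reisner's criterion this upgrade reduces to proving that $\widetilde{H}_j(\D;k)=0$ for every $j<\dim\D$.

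To establish this vanishing, I would apply Takayama's formula (Theorem~\ref{T}) in the multidegrees $\a=\0$ and $\a=\e_j$ for each $j\in[n]$. Every minimal generator of $I_{\D}^{(2)}$ has total degree at least two, so $x_j\notin P_F^2$ for every facet $F$; hence Lemma~\ref{degcomplex} gives $\D_{\0}(I_{\D}^{(2)})=\D_{\e_j}(I_{\D}^{(2)})=\D$, and Takayama's formula produces canonical identifications
\[
H^i_{\m}(S/I_{\D}^{(2)})_{\0}\;\cong\;\widetilde{H}_{i-1}(\D;k)\;\cong\;H^i_{\m}(S/I_{\D}^{(2)})_{\e_j}.
\]
I would next inspect the \v{C}ech complex $\check{C}^{\bullet}(x_1,\ldots,x_n;S/I_{\D}^{(2)})$ to show that multiplication by $x_j$ is an isomorphism between these two graded pieces. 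For $\sigma\in\D$ one has $[(S/I_{\D}^{(2)})_{x^{\sigma}}]_{\0}=k\cdot 1$ and $[(S/I_{\D}^{(2)})_{x^{\sigma}}]_{\e_j}=k\cdot x_j$, because $x^{k\sigma}$ and $x_j\cdot x^{k\sigma}$ never lie in $P_F^2$ for any facet $F\supseteq\sigma$; for $\sigma\notin\D$ the whole localization vanishes. Hence multiplication by $x_j$ is a level-wise isomorphism of the \v{C}ech complexes, and since the \v{C}ech differentials are $S$-linear, it descends to an isomorphism on cohomology.

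Finally, the $1$-Buchsbaum hypothesis forces $\m\cdot H^i_{\m}(S/I_{\D}^{(2)})=0$ for every $i<d:=\dim(S/I_{\D}^{(2)})$, so the same multiplication by $x_j$ is the zero map in this range. Combining the two facts gives $H^i_{\m}(S/I_{\D}^{(2)})_{\0}=0$, equivalently $\widetilde{H}_{i-1}(\D;k)=0$, for all $i<d$; that is, $\widetilde{H}_j(\D;k)=0$ for every $j<\dim\D$. Together with the Buchsbaum property of $\D$, Reisner's criterion yields that $\D$ is Cohen-Macaulay. The main technical step, and the one I expect to require the most care, is the \v{C}ech-level verification that multiplication by $x_j$ is an isomorphism between the two relevant graded pieces; once this is in hand, the rest follows immediately from Corollary~\ref{cor1.6} and Reisner's criterion.
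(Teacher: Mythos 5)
Your proposal is correct and follows essentially the same route as the paper: apply Corollary~\ref{cor1.6} (1-Buchsbaum $\Rightarrow$ generalized Cohen-Macaulay) to get Buchsbaumness of $\D$ and of the unions, then observe that since $\D_{\0}(I_{\D}^{(2)})=\D_{\e_j}(I_{\D}^{(2)})=\D$, multiplication by $x_j$ is an isomorphism $H^i_{\m}(S/I_{\D}^{(2)})_{\0}\to H^i_{\m}(S/I_{\D}^{(2)})_{\e_j}$, which the 1-Buchsbaum condition forces to vanish for $i<\dim(S/I_{\D}^{(2)})$, giving $\widetilde{H}_{i-1}(\D;k)=0$ and hence Cohen-Macaulayness via Reisner's criterion. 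The only difference is that where the paper invokes the commutative diagram of \cite[Lemma 2.3]{MN1} to identify the multiplication map with the identity on $\widetilde{H}^{i}(\D;k)$, you verify the isomorphism directly on the $\ZZ^n$-graded pieces of the \v{C}ech complex, and that verification is correct.
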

\begin{proof} Assume $S/I_{\Delta}^{(2)}$ is 1-Buchsbaum. Then $\D$  and $\bigcup_{i\in V}\st_{\D}(V\setminus\{i\})$ are Buchsbaum for all subsets $V\subseteq [n]$ with $2\le |V|\le \dim \Delta +1$ by Corollary ~\ref{cor1.6}. By contradiction, assume that $\Delta$ is not Cohen-Macaulay then $\widetilde{H}_{i} (\D;k)\ne 0$ for some $i<\dim(\D)$. Using \cite[Lemma 2.3]{MN1}, we have the following commutative diagram:
$$\CD
H_{\m}^{i+1}(S/I_{\D}^{(2)})_{\0} @>x_1>>
H_{\m}^{i+1}(S/I_{\D}^{(2)})_{\e_1} \\
@VVV @VVV \\
\widetilde{H}^{i} (\D_{\0}(I_{\D}^{(2)});k) @>>>
\widetilde{H}^{i} (\Delta_{\e_1}(I_{\D}^{(2)});k)
\endCD $$
where the bottom map is induced by the identity $\Delta_{\0}(I_{\D}^{(2)})=\Delta_{\e_1}(I_{\D}^{(2)})=\D$ (by Lemma ~\ref{degcomplex}) and the vertical maps are isormorphism as in Theorem ~\ref{T}. Hence, $x_1H_{\m}^{i+1}(S/I_{\D}^{(2)})_{\0}\ne 0$ for some $i<\dim(\D)$, a contradiction.
\end{proof}

We note that a part of Corollary ~\ref{Cor1} is independently proved in \rm{[RTY]} by using a method similar to our method. But, we will see that the converse part of this corollary holds true for an edge ideal (see Theorem  \ref{Main3}).

In \cite{EH} the following class of simplicial complexes, which contains independence complex of graphs, is introduced.

\begin{defn}    
Let $\D$ be a simplicial complex on $[n]$. An edge $\{u,v\}\in \D$ is contractible if every face $F\in\D$ satisfying $F\cup\{u\}\in\D$ and $F\cup\{v\}\in\D$ also satisfies $F\cup\{u,v\}\in\D$. The simplicial complex $\D$ is called contractible if each edge is contractible.  
\end{defn}

\begin{thm} \label{Main2} Let $\D$ be a contractible simplicial complex on $[n]$. Assume that $S/I_{\D}^{(2)}$  is generalized Cohen-Macaulay. Then 
$$H^i_{\m}(S/I_{\D}^{(2)})= [H^i_{\m}(S/I_{\D}^{(2)})]_{\bf 0} \oplus \bigoplus\limits_{u=1}^n [H^i_{\m}(S/I_{\D}^{(2)})]_{{\bf e}_u} \oplus \bigoplus_{\{u,v\}\notin \D}[H^i_{\m}(S/I_{\D}^{(2)})]_{\e_u+\e_v},$$
for all $i\in\NN$ and $i<\dim(S/I_{\D}^{(2)})$.
\end{thm}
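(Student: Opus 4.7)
The strategy is to compute each $\ZZ^n$-graded component $[H^i_{\m}(S/I_{\D}^{(2)})]_{\a}$ via Takayama's formula (Theorem~\ref{T}) and show that it vanishes whenever $\a$ lies outside the listed multidegrees. Since $\rho_j(I_{\D}^{(2)})\le 2$ for every $j$, Theorem~\ref{T} forces the component to be zero as soon as some $a_j\ge 2$, and the Takayama support condition $G_{\a}\in\D$ eliminates further $\a$'s. Hence only $\a\in\ZZ^n$ with all coordinates at most $1$ and with $G_{\a}\in\D$ need to be examined. For such an $\a$, write $V=\{j:a_j=1\}$ and let $\a^+$ denote the nonnegative truncation of $\a$. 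By Lemma~\ref{degcomplex} together with the computation carried out in the proof of Corollary~\ref{cor1.6}, $\D_{\a^+}(I_{\D}^{(2)})$ equals $\D$ when $|V|\le 1$ and equals $\bigcup_{i\in V}\st_{\D}(V\setminus\{i\})$ when $|V|\ge 2$; by Corollary~\ref{cor1.6} this complex is Buchsbaum.

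I first handle the components with $G_{\a}\neq\emptyset$. Unwinding the definition of the degree complex yields
$$\D_{\a}(I_{\D}^{(2)})=\lk_{\D_{\a^+}(I_{\D}^{(2)})}(G_{\a})$$
whenever $G_{\a}\in\D_{\a^+}(I_{\D}^{(2)})$, the complex being void otherwise (so that all reduced homology vanishes trivially). Applying the Buchsbaum criterion to $\D_{\a^+}(I_{\D}^{(2)})$ with $F=G_{\a}\neq\emptyset$ gives
$$\widetilde{H}_{i-|G_{\a}|-1}\bigl(\lk_{\D_{\a^+}(I_{\D}^{(2)})}(G_{\a});k\bigr)=0$$
for every $i<\dim(S/I_{\D}^{(2)})=\dim\D+1$, since the index $i-|G_{\a}|-1$ is strictly below $\dim\D_{\a^+}-|G_{\a}|=\dim\D-|G_{\a}|$. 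Consequently $[H^i_{\m}(S/I_{\D}^{(2)})]_{\a}=0$ in the required range for every $\a$ with $G_{\a}\neq\emptyset$.

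It remains to analyze $\a\in\{0,1\}^n$. The components with $|V|\in\{0,1\}$, and with $|V|=2$ and $\{u,v\}\notin\D$, are exactly those appearing in the theorem, so I only need to show that the remaining pieces vanish. For $|V|=2$ with $\{u,v\}\in\D$ I use that contractibility of the edge $\{u,v\}$ immediately identifies $\st_{\D}(u)\cap\st_{\D}(v)$ with $\st_{\D}(\{u,v\})$; since $\st_{\D}(u)$, $\st_{\D}(v)$ and $\st_{\D}(\{u,v\})$ are all cones, Mayer--Vietoris yields $\widetilde{H}_{*}(\st_{\D}(u)\cup\st_{\D}(v);k)=0$. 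For $|V|\ge 3$ the first observation is that contractibility of $\D$ is equivalent to $\D$ being flag (a short induction on $|V|$: if every pair from $V$ lies in $\D$, then contractibility of some edge $\{u,v\}\subseteq V$ applied to $V\setminus\{u,v\}$ gives $V\in\D$). Granted flagness, the indices $i\in V$ for which $\st_{\D}(V\setminus\{i\})\ne\emptyset$ are precisely the common vertices of all minimal non-face pairs contained in $V$, so the cover $\{\st_{\D}(V\setminus\{i\})\}_{i\in V}$ is a full cover by cones when $V\in\D$ and reduces to at most two cones when $V\notin\D$. For any nonempty subset $J\subseteq V$ of surviving indices, the set $V\setminus J$ acts as an apex of $\bigcap_{i\in J}\st_{\D}(V\setminus\{i\})$ whenever $V\setminus J\ne\emptyset$, and when $J=V\in\D$ iterated contractibility identifies the intersection with $\st_{\D}(V)$, still a cone. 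Every nonempty intersection in the cover is therefore contractible, and the nerve lemma (or an iterated Mayer--Vietoris argument) produces contractibility of $\D_{\a^+}=\bigcup_{i\in V}\st_{\D}(V\setminus\{i\})$, forcing $[H^i_{\m}]_{\a}=0$.

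Combining both blocks yields the claimed direct-sum decomposition. The main obstacle will be the analysis when $|V|\ge 3$: one must keep track of which stars $\st_{\D}(V\setminus\{i\})$ actually enter the cover, and then check that in each surviving configuration---whether $V\in\D$, or $V\notin\D$ with all minimal non-face pairs of $V$ sharing a common vertex, or no such common vertex---every iterated intersection admits either an apex or a collapse to $\st_{\D}(V)$, so that the resulting nerve is contractible.
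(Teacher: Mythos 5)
Your proposal is correct, and its skeleton is the same as the paper's: Takayama's formula plus $\rho_j(I_{\D}^{(2)})\le 2$ reduces everything to degrees $\a$ with coordinates at most $1$, Lemma~\ref{degcomplex} identifies $\D_{\a}(I_{\D}^{(2)})$ with $\D$ or with $\D_V=\bigcup_{i\in V}\st_{\D}(V\setminus\{i\})$, and the vanishing outside the listed multidegrees comes from showing these complexes are acyclic because the relevant stars and their intersections are cones. Where you deviate is in two local steps, and both substitutions are sound. First, for degrees with $G_{\a}\neq\emptyset$ the paper simply invokes \cite[Proposition 1]{T} together with the finite length of $H^i_{\m}(S/I_{\D}^{(2)})$, whereas you re-derive the vanishing from Corollary~\ref{cor1.6}: $\D_{\a^+}(I_{\D}^{(2)})$ is Buchsbaum (pure of dimension $\dim\D$ by Lemma~\ref{degcomplex}, using unmixedness), $\D_{\a}=\lk_{\D_{\a^+}}(G_{\a})$ when nonvoid, and the Buchsbaum criterion kills $\widetilde{H}_{i-|G_{\a}|-1}$ exactly in the range $i<\dim\D+1$; this is a bit more roundabout but it makes explicit where the generalized Cohen--Macaulay hypothesis enters, and it only uses material already in the paper. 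Second, for $\a\in\{0,1\}^n$ you replace the paper's direct use of contractibility (pairwise intersections of stars equal $\st_{\D}(V)$ when $V\in\D$, explicit cone/join descriptions when $V\notin\D$, then Mayer--Vietoris by induction) by the observation that contractible means flag, followed by a nerve-lemma argument in which every intersection $\bigcap_{i\in J}\st_{\D}(V\setminus\{i\})$ is a cone with apex in $V\setminus J$ (or equals $\st_{\D}(V)$ when $J=V\in\D$); this treats the paper's Case 1 and Case 2 uniformly, at the cost of having to check nonemptiness of the intersections so that the nerve is a simplex, and of handling the void case separately---both of which your sketch does, at least implicitly. So the argument goes through; the paper's version is just the same computation carried out barehanded, without the flagness reformulation or the nerve lemma.
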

\begin{proof} Put $ L_i= H^i_{\m}(S/I_{\D}^{(2)})$ for all $i\in\NN$ and $i<\dim(S/I_{\D}^{(2)})$. Using \rm{[T, Proposition 1]} and our assumption, we have $[L_i]_\a=0$ for all $\a\in\ZZ^n\setminus \NN^n$. It is clear that $\rho_j(I_{\D}^{(2)})\le 2$ for all $j= 1,\ldots,n$. Using Theorem ~\ref{T}, we have
$$L_i=\bigoplus_{\a\in \{0,1\}^n}[L_i]_{\a}.$$
Fix $\a\in\{0,1\}^n$ such that ${\a}\ne {\0},  {\bf e}_1,\ldots,{\bf e}_n$. Let $V=\{i\in[n]~|~a_i=1\}$, then $|V|\ge 2$. By Lemma ~\ref{degcomplex} and the definition of $\D_V$ as of the proof of Theorem ~\ref{Main1}, we have $\Delta_{\a}(I_{\D}^{(2)})=\D_V$. From this and  Theorem ~\ref{T}, $$[L_i]_{\a}\cong\widetilde{H}_{i-1}(\Delta_{\a}(I_{\D}^{(2)});k)=\widetilde{H}_{i-1}(\D_V;k).$$ 
We have several cases as follows.

{\bf Case 1}: $V\in\D$. Without loss of  generality, let $V=\{1,\ldots,m\}$. Thus
$$\Delta_V=\bigcup_{i=1}^m \st_{\Delta}(V\setminus\{i\}),$$
where $\st_{\Delta}(V\setminus\{i\})\ne \emptyset$ for all $i$. Note that
$$\st_{\Delta}(V\setminus\{i\})=\seq{V\setminus\{i\}}*\lk_{\Delta}(V\setminus\{i\}),$$
which is a cone for all $i\in V$. Similar as Case 1 in the proof of Theorem ~\ref{Main1} and $\D$ is contractible, we have
$$\st_{\Delta}(V\setminus\{i\})\cap \st_{\Delta}(V\setminus\{j\})=\st_{\Delta}(V)=\seq{V}*\lk_{\Delta}(V),$$
which implies that $\st_{\Delta}(V\setminus\{i\})\cap \st_{\Delta}(V\setminus\{j\})$ is a cone for all $i\ne j\in V$. From the Mayer-Vietoris sequence and by induction on $t$, $\widetilde{H}_{j}(\bigcup_{i=1}^t\st_{\Delta}({V\setminus\{i\}});k)=0$ for all $j$. In particular, $\Delta_V$ is acyclic. So $[L_j]_{\a}=0$ for all $j$ by Theorem ~\ref {T}.

{\bf Case 2}: $V\notin\D$. It is enough to show that $\widetilde{H}_j(\Delta_V;k)=0$ for all $j$ if $|V|\ge 3$. If there exists $i\ne j\ne q\in V$ such that $V\setminus\{i\}, V\setminus\{j\}, V\setminus\{q\}\in\D$. Since $\D$ is contractible, $\{i,j\}, \{i,q\}, \{j,q\}\notin\D$. Then $\D_{V}=\emptyset$ (by its definition). Otherwise, we have 
$\Delta_V = \st_{\Delta}(V\setminus\{p\})$ for some $p\in V$ or 
$$\D_{V}= \st_{\Delta}(V\setminus\{p\})\cup \st_{\Delta}(V\setminus\{q\}) = \seq{V\setminus\{p,q\}}*\lk_{\st_{\Delta}(p)\cup \st_{\Delta}(q)} (V\setminus\{p,q\})$$
for some $p\ne q\in V$ or $\D_{V}=\emptyset$. This implies that $\Delta_V$ is always acyclic. Hence $[L_j]_{\a}=0$ for all $j$  as shown above. Thus, the proof is complete. 
\end{proof}

We can apply Theorem ~\ref{Main2} to get the $k$-Buchsbaum property of the generalized Cohen-Macaulay second symbolic power of edge ideals. 

\begin{cor} Let $I(G)$ be the edge ideal of a graph $G$. If $S/I(G)^{(2)}$  is generalized Cohen-Macaulay, then it is $3$-Buchsbaum.
\end{cor}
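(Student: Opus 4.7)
The plan is to combine the structural decomposition from Theorem~\ref{Main2} with the vanishing rules supplied by Theorem~\ref{T}. The first step is to verify that the independence complex $\Delta(G)$ is contractible (Definition~3.6): whenever $F\cup\{u\}$ and $F\cup\{v\}$ are independent sets of $G$ and $\{u,v\}\in\Delta(G)$ (i.e.\ $uv\notin E(G)$), the union $F\cup\{u,v\}$ is again independent. Thus Theorem~\ref{Main2} applies, and for each $i<\dim(S/I(G)^{(2)})$ the local cohomology module $H^i:=H^i_{\m}(S/I(G)^{(2)})$ decomposes as a direct sum of graded pieces in multidegrees of only three types: $\0$, $\e_u$ with $1\le u\le n$, and $\e_u+\e_v$ with $uv\in E(G)$.

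Next, I would feed into this decomposition two vanishing rules on graded pieces. From Theorem~\ref{T} together with $\rho_j(I(G)^{(2)})\le 2$, the component $[H^i]_{\a}$ is zero as soon as some $a_j\ge 2$; and the decomposition above forces $[H^i]_{\a}=0$ for every $\a\in\{0,1\}^n$ whose support has size at least three. Since multiplication by $x_j$ shifts $\ZZ^n$-multidegree by $\e_j$, these two rules give a cascade: for any piece $[H^i]_{\e_u+\e_v}$, any single $x_j$ sends it into a multidegree that either has a coordinate equal to $2$ (when $j\in\{u,v\}$) or has support of size three (when $j\notin\{u,v\}$), hence into a zero component. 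Thus $\m$ already annihilates every $[H^i]_{\e_u+\e_v}$.

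Iterating once more, $\m^2$ annihilates each $[H^i]_{\e_u}$, since $x_j\cdot [H^i]_{\e_u}$ lands either in a multidegree already killed by one of the two vanishing rules ($2\e_u$, or $\e_u+\e_j$ with $uj\notin E(G)$), or else in a piece $[H^i]_{\e_u+\e_j}$ with $uj\in E(G)$ which the previous step shows is annihilated by $\m$. In turn $\m^3$ annihilates $[H^i]_{\0}$, because $x_j\cdot [H^i]_{\0}\subseteq [H^i]_{\e_j}$, which the previous step shows is annihilated by $\m^2$. Summing over the three kinds of surviving components gives $\m^3 H^i=0$ for every $i<\dim(S/I(G)^{(2)})$, which is the very definition of $3$-Buchsbaumness.

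I do not anticipate a serious technical obstacle: the substantive work, namely establishing the rigid decomposition and the vanishing of all ``bad'' multidegree components, has already been carried out in Theorem~\ref{Main2}. What remains is a careful bookkeeping about how the action of a cubic monomial $x_jx_kx_\ell$ moves a surviving multidegree into a vanishing one. The only point that requires a moment of care is to couple correctly the two vanishing mechanisms -- the exponent bound $\rho_j(I(G)^{(2)})\le 2$ and the restriction of surviving multidegrees to support size at most two with an edge condition -- so that every length-three chain of degree shifts inevitably hits a zero component.
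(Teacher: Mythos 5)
Your proof is correct and is essentially the paper's intended argument: the paper states this corollary as an immediate application of Theorem \ref{Main2} to the (contractible) independence complex $\Delta(G)$, and the multidegree-shift bookkeeping you supply---$\m$ annihilates the pieces in degrees $\e_u+\e_v$ with $uv\in E(G)$, hence $\m^2$ the pieces in degrees $\e_u$ and $\m^3$ the piece in degree $\0$---is exactly the omitted verification. The only cosmetic slip is filing the vanishing of $[H^i_{\m}(S/I(G)^{(2)})]_{\e_u+\e_j}$ for non-edges $uj$ under your two stated vanishing rules, whereas it comes from the decomposition of Theorem \ref{Main2} itself; since that vanishing is part of the theorem's statement, nothing is actually missing.
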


It is to be noticed that we cannot replace $3$-Buchsbaumness by $2$-Buchsbaumness.

\begin{exm} Let $G$ be a square $\{12, 23, 34, 14\}$ and $I=I(G)$ its edge ideal in $S=k[x_1,x_2,x_3,x_4]$. Then, $\F(\D(G))=\{\{1,3\},\{2,4\}\}$ and $S/I^{(2)}$ is generalized Cohen-Macaulay of dimension 2. Using \rm{[MN1, Lemma 2.3]}, we have the following commutative diagram:
$$\CD
H_{\m}^{1}(S/I^{(2)})_{\0} @>x_1x_2>>
H_{\m}^{1}(S/I^{(2)})_{\e_1+\e_2} \\
@VVV @VVV \\
\widetilde{H}^{0} (\D_{\0}(I^{(2)});k) @>>>
\widetilde{H}^{0} (\Delta_{\e_1+\e_2}(I^{(2)});k)
\endCD $$
where the bottom map is identity $\Delta_{\0}(I^{(2)})=\Delta_{\e_1+\e_2}(I^{(2)})=\D(G)$ (by Lemma ~\ref{degcomplex}) and the vertical maps are isormorphism as in Theorem ~\ref{T}. Hence, $x_1x_2H_{\m}^{1}(S/I^{(2)})_{\0}\ne 0$. It implies $\m^2.H_{\m}^{1}(S/I^{(2)})\ne 0$ as required.
\end{exm}

We also will give another characterization of a graph in which the second symbolic power is generalized Cohen-Macaulay in terms of certain its subgraphs.

Let $G$ be a graph on $[n]$. For each $i\in [n]$, let $G_{i}=G\setminus(\{i\}\cup N(i))$. 

\begin{cor}\label{Cor2} 
$S/I(G)^{(2)}$ is generalized Cohen-Macaulay if and only if $G$ is unmixed and $G_i$ is special Cohen-Macaulay for all $i\in [n]$.
\end{cor}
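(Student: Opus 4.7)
The plan is to combine Corollary~\ref{cor1.6}, which reduces generalised Cohen-Macaulayness of $S/I(G)^{(2)}$ to Buchsbaumness of $\Delta=\Delta(G)$ and of every $\Gamma_V:=\bigcup_{i\in V}\st_\Delta(V\setminus\{i\})$ for $2\le|V|\le\dim\Delta+1$, with Theorem~\ref{Main1}. For the forward direction, Buchsbaumness of $\Delta$ gives purity (so $G$ is unmixed) and makes each $\lk_\Delta(i)=\Delta(G_i)$ Cohen-Macaulay. To verify Theorem~\ref{Main1}(ii) for $G_i$, I would use the identity
$$\st_{\Delta(G_i)}(p)\cup\st_{\Delta(G_i)}(q)=\lk_{\st_\Delta(p)\cup\st_\Delta(q)}(\{i\}),$$
valid when $pq\in E(G_i)$, so that $\{i,p\},\{i,q\}\in\Delta$ and $\{i\}$ is a face of $\Gamma_{\{p,q\}}$. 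The right-hand side is the link of a non-empty face in the Buchsbaum complex $\Gamma_{\{p,q\}}$, hence Cohen-Macaulay of the correct dimension $\dim\Delta(G_i)$.

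The backward direction rests on the following key lemma: if $G$ is unmixed and each $G_i$ is special Cohen-Macaulay, then $G^F:=G\setminus(F\cup N(F))$ is special Cohen-Macaulay for every non-empty $F\in\Delta$. I would prove this by induction on $|F|$, the base $|F|=1$ being the hypothesis. Writing $F=F'\cup\{i\}$ and setting $H=G^{F'}$ (special CM by induction), a direct set computation shows $G^F=H_i$. Theorem~\ref{Main1}(iii) applied to $H$ then yields $\Delta(H)$ Cohen-Macaulay, so $\Delta(H_i)=\lk_{\Delta(H)}(i)$ is Cohen-Macaulay, and for each edge $pq\in E(H_i)$ the graph $(H_i)_{pq}=(H_{pq})_i$ is Cohen-Macaulay as $\lk_{\Delta(H_{pq})}(i)$. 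Purity of these links gives $\alpha(H_i)=\alpha(H)-1$ and $\alpha((H_i)_{pq})=\alpha(H)-2$, and Theorem~\ref{Main1}(iii) for $H_i$ closes the induction.

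With the key lemma available, Buchsbaumness of $\Delta$ is clear (purity from $G$ unmixed, and $\lk_\Delta(F)=\lk_{\Delta(G_i)}(F\setminus\{i\})$ is Cohen-Macaulay for each $i\in F$ since $\Delta(G_i)$ is Cohen-Macaulay). For each $\Gamma_V$ I would run the case analysis on $G[V]$ from the proof of Theorem~\ref{Main1}: if $V\in\Delta$, an inductive Mayer--Vietoris argument via Lemma~\ref{lema2.4}(i) (mimicking Case~1 of Main1, now supplied by the CM-ness of $\lk_\Delta(V\setminus\{i\})$ and $\lk_\Delta(V)$) gives $\Gamma_V$ Cohen-Macaulay; if $G[V]$ has a triangle or disjoint edges, then $\Gamma_V=\emptyset$; if $G[V]$ is a star with $\ge 2$ edges, then $\Gamma_V$ collapses to the single star $\st_\Delta(V\setminus\{x\})$, which is a join with a Cohen-Macaulay link; if $G[V]$ has a single edge $pq$ with $W:=V\setminus\{p,q\}\ne\emptyset$, then $\Gamma_V=\langle W\rangle*(\st_{\Delta(G^W)}(p)\cup\st_{\Delta(G^W)}(q))$, Cohen-Macaulay by the key lemma and Theorem~\ref{Main1} applied to $G^W$. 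The main obstacle is the remaining case $V=\{p,q\}$ with $pq\in E(G)$, where only Buchsbaumness of $\st_\Delta(p)\cup\st_\Delta(q)$ is required: purity follows from $G$ unmixed, and for a non-empty face $F$ the link equals either $\st_{\Delta(G^F)}(p)\cup\st_{\Delta(G^F)}(q)$ (when $F\in\st_\Delta(p)\cap\st_\Delta(q)$, Cohen-Macaulay by the key lemma plus Theorem~\ref{Main1}, noting $pq\in E(G^F)$) or a cone $\st_{\lk_\Delta(F)}(p)$ (when $F$ lies in only one star, Cohen-Macaulay because $\lk_\Delta(F)$ is Cohen-Macaulay). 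Dimension bookkeeping throughout uses that $G$ is unmixed.
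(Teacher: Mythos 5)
Your proof is correct, but it takes a genuinely different route from the paper. The paper's argument is a short reduction: by \cite{CST}, $S/I(G)^{(2)})$ being generalized Cohen-Macaulay is equivalent to equidimensionality together with Cohen-Macaulayness of $I_{\Delta(G)}^{(2)}S[x_i^{-1}]$ for every $i$; since $\Delta(G_i)=\lk_{\Delta(G)}(i)$ and $I_{\Delta(G)}^{(2)}S[x_i^{-1}]$ is Cohen-Macaulay if and only if $I_{\lk_{\Delta(G)}(i)}^{(2)}$ is (\cite[Corollary 3.5]{TT}, \cite[Theorem 2.1]{MT2}), Theorem~\ref{Main1} applied to each $G_i$ finishes at once. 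You instead stay entirely inside the degree-complex criterion of Corollary~\ref{cor1.6}: in the forward direction you obtain Cohen-Macaulayness of $\Delta(G_i)$ and of $\st_{\Delta(G_i)}(p)\cup\st_{\Delta(G_i)}(q)$ as links of nonempty faces of the Buchsbaum complexes $\Delta$ and $\st_{\Delta}(p)\cup\st_{\Delta}(q)$ (your link identity is valid, and when $E(G_i)\ne\emptyset$ the case $|V|=2$ of Corollary~\ref{cor1.6} is indeed available because $\dim\Delta\ge 1$); in the converse you prove the localization lemma that $G^F=G\setminus(F\cup N(F))$ is special Cohen-Macaulay for every nonempty independent $F$, and rerun the case analysis from the proof of Theorem~\ref{Main1}, with Buchsbaumness replacing Cohen-Macaulayness exactly in the one case $V=\{p,q\}\in E(G)$ where the distinction matters. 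I checked the key identifications ($G^{F'\cup\{i\}}=(G^{F'})_i$, $\Delta(G^F)=\lk_{\Delta}(F)$, $\lk_{\st_{\Delta}(p)\cup\st_{\Delta}(q)}(F)=\st_{\Delta(G^F)}(p)\cup\st_{\Delta(G^F)}(q)$ for $F\in\st_{\Delta}(p)\cap\st_{\Delta}(q)$) and the dimension counts via unmixedness, and they hold. What the paper's route buys is brevity: the localization results of \cite{TT} and \cite{MT2} do precisely the work of your key lemma, which is the combinatorial shadow of the fact that Cohen-Macaulayness of the second symbolic power localizes at faces. What your route buys is a self-contained combinatorial argument avoiding \cite{CST} and \cite{TT}, and as a by-product it identifies all degree complexes of $I(G)^{(2)}$ explicitly and shows every face-localization $G^F$ is special Cohen-Macaulay, which is close in spirit to Theorem~\ref{Main2} and to the Question posed after Theorem~\ref{Main3}.
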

\begin{proof} It is well-known that $S/I(G)^{(2)}$ is generalized Cohen-Macaulay if and only if $I_{\D(G)}^{(2)}S[x^{-1}_i]$ is Cohen-Macaulay for $i = 1,\ldots, n$, and $S/I(G)^{(2)}$ is equidimensional (see \cite{CST}). It is easy to check that $\D(G_{i})=\lk_{\D(G)}(i)$ for all $i\in[n]$. And, note that $I_{\D(G)}^{(2)}S[x^{-1}_i]$ is Cohen-Macaulay if and only if  $I_{\lk_{\D(G)}(i)}^{(2)}$ is Cohen-Macaulay (see \cite[Corollary 3.5]{TT} and \cite[Theorem 2.1]{MT2}). From this, our conclusion is given by Theorem ~\ref{Main1}.
\end{proof}

With the same proof of Corollary ~\ref{Cor3}, we also get the following:

\begin{cor}\label{ordinary-gCM} $S/I(G)^{2}$ is generalized Cohen-Macaulay if and only if $G$ is unmixed, and $G_i$ is special Cohen-Macaulay and has no triangles for all $i\in [n]$.
\end{cor}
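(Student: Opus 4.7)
The plan is to follow the proof of Corollary~\ref{Cor2} verbatim, but replace the appeal to Theorem~\ref{Main1} at the last step by an appeal to Corollary~\ref{Cor3}. This is the exact analogue of the relation between Corollary~\ref{Cor3} and Theorem~\ref{Main1}, only transplanted from the Cohen--Macaulay setting to the generalized Cohen--Macaulay setting.

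The first step is to recall the standard characterization used in the proof of Corollary~\ref{Cor2}: $S/I(G)^{2}$ is generalized Cohen--Macaulay if and only if $S/I(G)^{2}$ is equidimensional and $I(G)^{2}S[x_i^{-1}]$ is Cohen--Macaulay for every $i\in[n]$ (see \cite{CST}). Since the minimal primes of $I(G)^{2}$ coincide with those of $I(G)$, equidimensionality of $S/I(G)^{2}$ is equivalent to $G$ being unmixed. The second step is the localization reduction, identical to the one used in the proof of Corollary~\ref{Cor2}: inverting $x_i$ forces $x_j=0$ for every $j\in N(i)$ because $x_ix_j\in I(G)$ and $x_i$ is now a unit, so $I(G)^{2}S[x_i^{-1}]$ becomes $I(G_i)^{2}$ in the polynomial ring $S_i$ on $V(G_i)$, up to a trivial Laurent extension in $x_i$. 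Applying the ordinary-square analogues of \cite[Corollary~3.5]{TT} and \cite[Theorem~2.1]{MT2} one concludes that $I(G)^{2}S[x_i^{-1}]$ is Cohen--Macaulay if and only if $S_i/I(G_i)^{2}$ is Cohen--Macaulay.

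The third step feeds each $G_i$ into Corollary~\ref{Cor3}: $S_i/I(G_i)^{2}$ is Cohen--Macaulay precisely when $G_i$ is special Cohen--Macaulay and triangle-free. Combining the three steps yields exactly the stated characterization. The only potentially delicate point is the second step, where one must check that localization at $x_i$ correctly produces the ordinary square of $I(G_i)$ (and not just its symbolic square); this is straightforward since squaring commutes with localization, and the reduction is the literal transplant of the argument given for Corollary~\ref{Cor2}. Note also that the ``triangle-free'' condition in the conclusion naturally attaches to each $G_i$ rather than to $G$: by \cite[Lemma~5.8, Theorem~5.9]{SVV} the equality $I(G_i)^{2}=I(G_i)^{(2)}$ (implicit in Cohen--Macaulayness of $S_i/I(G_i)^{2}$ via Corollary~\ref{Cor3}) is precisely equivalent to $G_i$ having no triangles, which explains why no global triangle-free hypothesis on $G$ appears.
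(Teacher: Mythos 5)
Your overall architecture is the same as the paper's: the criterion from \cite{CST} (generalized Cohen--Macaulayness of $S/I(G)^2$ equals equidimensionality plus Cohen--Macaulayness of every localization $I(G)^2S[x_i^{-1}]$), a reduction of each localization to the subgraph $G_i$, and then Corollary~\ref{Cor3} applied to each $G_i$, which is where the triangle-free condition on the $G_i$ (via \cite[Lemma 5.8, Theorem 5.9]{SVV}) enters. The paper's proof is exactly this, except that for the middle step it cites \cite[Corollary 4.2]{TT}, a result specifically about localizations of \emph{ordinary} powers.

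That middle step is where your argument has a genuine gap. Inverting $x_i$ does not force $x_j=0$ for $j\in N(i)$ modulo $I(G)^2S[x_i^{-1}]$: from $x_ix_j\in I(G)$ you only get $x_j\in I(G)S[x_i^{-1}]$, and hence $x_j^2$, $x_jx_k$ (for $j,k\in N(i)$) and $x_j\cdot e$ (for $e$ an edge of $G_i$) lie in $I(G)^2S[x_i^{-1}]$, but $x_j$ itself does not, since a monomial of the form $x_jx_i^k$ is never divisible by a product of two edges. What ``squaring commutes with localization'' actually gives is $I(G)^2S[x_i^{-1}]=\bigl(I(G)S[x_i^{-1}]\bigr)^2=\bigl(P^2+P\,I(G_i)+I(G_i)^2\bigr)S[x_i^{-1}]$, where $P=(x_j \mid j\in N(i))$; this is not a Laurent or polynomial extension of $I(G_i)^2$, because the neighbor variables survive in the quotient (as nilpotents) and the cross terms $P^2$ and $P\,I(G_i)$ are present. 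Consequently the equivalence ``$I(G)^2S[x_i^{-1}]$ is Cohen--Macaulay iff $S_i/I(G_i)^2$ is Cohen--Macaulay'' is exactly the nontrivial content of \cite[Corollary 4.2]{TT}; it cannot be dismissed as straightforward, and your appeal to an ``ordinary-square analogue of \cite[Theorem 2.1]{MT2}'' is not available off the shelf, since that theorem concerns intersections of powers of face primes (symbolic-type ideals), for which localization behaves much more simply than for ordinary powers. If you replace your second step by the correct citation to \cite[Corollary 4.2]{TT} (or by an actual argument disposing of the terms $P^2$ and $P\,I(G_i)$), the rest of your proof coincides with the paper's.
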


It should be noted that we always have the following implications:

\begin{center}
Cohen-Macaulayness $\Rightarrow$  Buchsbaumness $\Rightarrow$ 1-Buchsbaumness 
$\Rightarrow$ generalized Cohen-Macaulayness.
\end{center}

Now, we will prove the converse of Corollary ~\ref{Cor1} for edge ideals.

\begin{thm}\label{Main3} Let $I(G)$ be the edge ideal of a graph $G$. The following conditions are equivalent:
\begin{itemize}
\item [(i)] $S/I(G)^{(2)}$ is Buchsbaum.
\item[(ii)] $S/I(G)^{(2)}$ is 1-Buchsbaum.
\item[(iii)] $G$  is Cohen-Macaulay  and $S/I(G)^{(2)}$ is generalized Cohen-Macaulay.
\item[(iv)] $G$ is Cohen-Macaulay and $G_i$ is special Cohen-Macaulay for all $i\in [n]$.
\end{itemize}
\end{thm}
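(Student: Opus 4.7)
The plan is to prove the cycle (i)$\Rightarrow$(ii)$\Rightarrow$(iii)$\Leftrightarrow$(iv)$\Rightarrow$(i). The first arrow is immediate, as Buchsbaumness trivially implies $1$-Buchsbaumness. For (ii)$\Rightarrow$(iii), the condition $\m H^i_\m(S/I(G)^{(2)})=0$ makes each $H^i_\m$ an artinian $\m$-torsion module, hence of finite length, which is exactly generalized Cohen-Macaulayness; moreover, Corollary~\ref{Cor1} already tells us that $1$-Buchsbaumness of $S/I(G)^{(2)}$ forces $\Delta=\Delta(G)$ to be Cohen-Macaulay, i.e.\ $G$ is Cohen-Macaulay. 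The equivalence (iii)$\Leftrightarrow$(iv) is nothing more than Corollary~\ref{Cor2} together with the observation that Cohen-Macaulay graphs are unmixed.

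\textbf{The main direction (iii)$\Rightarrow$(i).} Assume (iii). Since independence complexes are contractible, Theorem~\ref{Main2} applies and yields
\[
H^i_\m(S/I(G)^{(2)}) = [H^i_\m]_{\0} \oplus \bigoplus_{u=1}^n [H^i_\m]_{\e_u} \oplus \bigoplus_{\{u,v\}\notin \Delta} [H^i_\m]_{\e_u+\e_v}
\]
for every $i<\dim(S/I(G)^{(2)})$. By Takayama's formula (Theorem~\ref{T}) together with Lemma~\ref{degcomplex}, the $\0$- and $\e_u$-pieces are both isomorphic to $\widetilde H^{i-1}(\Delta;k)$, which vanishes for $i-1<\dim\Delta$ by the Cohen-Macaulayness of $\Delta$. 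Thus local cohomology is concentrated in the multidegrees $\e_u+\e_v$ with $uv\in E(G)$. Multiplication by any $x_j$ shifts such a piece into $[H^i_\m]_{\e_u+\e_v+\e_j}$, a multidegree not on the admissible list of Theorem~\ref{Main2}, so the target is zero. This already proves $\m H^i_\m(S/I(G)^{(2)})=0$, i.e.\ $1$-Buchsbaumness.

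\textbf{From $1$-Buchsbaum to Buchsbaum.} To finish (iii)$\Rightarrow$(i), invoke the Stückrad--Vogel criterion: $M=S/I(G)^{(2)}$ is Buchsbaum iff the canonical map $\ext^i_S(k,M)\to H^i_\m(M)$ is surjective for all $i<\dim M$. Working multigradedly, the task reduces to producing, in each degree $\e_u+\e_v$ with $uv\in E(G)$, explicit Koszul cocycles in $\ext^i_S(k,M)$ whose images span the corresponding graded piece of $H^i_\m(M)$. Equivalently, one needs the edge differentials $d_r\colon E_r^{0,i}\to E_r^{r,i-r+1}$ in the Grothendieck spectral sequence $\ext^p(k,H^q_\m(M))\Rightarrow \ext^{p+q}_S(k,M)$ all to vanish along $p=0$. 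I expect this to be the main obstacle: it is substantially stronger than the vanishing of $\m H^i_\m$, and requires a careful accounting of the $\ZZ^n$-graded structure. The saving grace is that $H^i_\m(M)$ is concentrated only in multidegrees of total weight $2$, so the possible obstruction classes are sharply constrained; I would complete the argument by a direct Čech--Koszul computation, exhibiting preimages built from the monomials $x_ux_v$ with $uv\in E(G)$ and leveraging the special Cohen-Macaulayness of each $G_i$ provided by (iv).
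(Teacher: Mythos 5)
Your overall strategy and most of the steps coincide with the paper's: the trivial implication, the equivalence (iii)$\Leftrightarrow$(iv) via Corollary~\ref{Cor2}, and, in the main direction, the use of Theorem~\ref{Main2} together with Takayama's formula (Theorem~\ref{T}) and Lemma~\ref{degcomplex} to kill the $\0$- and $\e_u$-components (Cohen-Macaulayness of $\Delta$) and conclude that, for $i<\dim(S/I(G)^{(2)})$, the module $H^i_{\m}(S/I(G)^{(2)})$ is concentrated in the multidegrees $\e_u+\e_v$ with $uv\in E(G)$; your multiplication argument then correctly gives $\m H^i_{\m}(S/I(G)^{(2)})=0$, i.e.\ 1-Buchsbaumness. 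The genuine gap is the final step. You yourself flag the passage from this to Buchsbaumness as ``the main obstacle'' and only outline a program (vanishing of the edge maps in the $\ext$-spectral sequence, or an explicit \v{C}ech--Koszul construction of preimages of the classes in degrees $\e_u+\e_v$), without carrying it out. Since 1-Buchsbaumness is in general strictly weaker than Buchsbaumness, the implication (iii)$\Rightarrow$(i) is not proved as written.

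What closes the gap --- and what the paper does --- is to use the stronger information you have already established: for every $i<\dim(S/I(G)^{(2)})$ the module $H^i_{\m}(S/I(G)^{(2)})$ is concentrated in the \emph{single} $\ZZ$-graded degree $2$, because every multidegree $\e_u+\e_v$ has total degree $2$. A graded ring whose local cohomology modules below the dimension are each concentrated in one degree is Buchsbaum; this is precisely \cite[Chapter I, Proposition 3.10]{SV}, which the paper invokes at this point (in its proof of (ii)$\Rightarrow$(i), after using Corollary~\ref{Cor1} to get $\Delta$ Cohen-Macaulay). So no spectral-sequence or Koszul-cocycle computation, and no appeal to the special Cohen-Macaulayness of the $G_i$, is needed at this stage: the surjectivity of $\ext^i_S(S/\m,S/I(G)^{(2)})\to H^i_{\m}(S/I(G)^{(2)})$ follows from the cited criterion once the one-degree concentration is observed. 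Your treatment of (ii)$\Rightarrow$(iii) (finite length of the artinian modules $H^i_{\m}$ killed by $\m$, plus Corollary~\ref{Cor1}) is fine and only superficially different from the paper's route through Corollary~\ref{cor1.6}.
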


\begin{proof} One can see that (i)$\Longrightarrow$(ii) is trivial.

(ii)$\Longrightarrow$(i):\quad By Corollary ~\ref{Cor1}, $\D(G)$ is Cohen-Macaulay. On the other hand, $\D_{\a}(I(G)^{(2)})=\D(G)$ if $\a=\0$ or $\a=\e_i$ for some $1\le i\leq n$ by Lemma ~\ref{degcomplex}. Using Theorem ~\ref{T}, we have
$$[H^i_{\m}(S/I(G)^{(2)})]_{\bf 0}=  [H^i_{\m}(S/I(G)^{(2)})]_{{\bf e}_u}=(0)$$ 
for all $1\le u\leq n$ and $i<\dim(S/I(G)^{(2)})$. From this and Theorem ~\ref{Main2}, one can see that 
$$H^i_{\m}(S/I(G)^{(2)})=\bigoplus_{uv\in E(G)}[H^i_{\m}(S/I(G)^{(2)})]_{\e_u+\e_v},$$
for all $i<\dim(S/I(G)^{(2)})$. By \rm{[SV, Chapter 1, Proposition 3.10]}, $S/I(G)^{(2)}$ is Buchsbaum as required.

(ii)$\Longrightarrow$(iii):\quad The proof is straightforward from Corollary ~\ref{Cor1} and Corollary ~\ref{cor1.6}.

(iii)$\Longrightarrow$(ii):\quad The same reasoning as in (ii)$\Longrightarrow$(i) shows that 
$$H^i_{\m}(S/I(G)^{(2)}) =[H^i_{\m}(S/I(G)^{(2)})]_{2}$$
for all $i<\dim S/I(G)^{(2)}$. Hence $S/I(G)^{(2)}$ is 1-Buchsbaum.

(iii)$\Longleftrightarrow$(iv):\quad It is obvious that our assertion is given by Corollary \ref{Cor2}.

\end{proof}

With the same proof as in Theorem ~\ref{Main3} and using \cite[Chapter 1, Propositon 3.10]{SV}, we also obtain the following characterization of a graph in which the second ordinary power is Buchsbaum.

\begin{thm} Let $I(G)$ be the edge ideal of a graph $G$. The following conditions are equivalent:
\begin{itemize}
\item [(i)] $S/I(G)^{2}$ is Buchsbaum.
\item[(ii)] $S/I(G)^{2}$ is 1-Buchsbaum.
\item[(iii)] $G$  is Cohen-Macaulay  and $S/I(G)^{2}$ is generalized Cohen-Macaulay.
\item[(iv)] $G$ is Cohen-Macaulay, and $G_i$ is special Cohen-Macaulay and has no triangles for all $i\in[n]$.              
\end{itemize}
\end{thm}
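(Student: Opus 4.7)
The plan is to mirror the proof of Theorem~\ref{Main3} step by step, replacing $I(G)^{(2)}$ throughout by $I(G)^{2}$ and invoking Corollary~\ref{ordinary-gCM} in place of Corollary~\ref{Cor2}. The equivalence $(iii)\Leftrightarrow(iv)$ is then immediate from Corollary~\ref{ordinary-gCM}, while $(i)\Rightarrow(ii)$ is tautological. For $(ii)\Rightarrow(iii)$, 1-Buchsbaumness trivially yields the generalized Cohen-Macaulay property of $S/I(G)^{2}$. The Cohen-Macaulayness of $\Delta(G)$ is obtained by repeating the commutative-diagram argument used for Corollary~\ref{Cor1}: since every minimal generator of $I(G)^{2}$ has total degree $4$, a short check using the monomial primary decomposition shows $x_i\notin Q_F$ for every primary component $Q_F$ of $I(G)^{2}$, so the identities $\Delta_{\0}(I(G)^{2})=\Delta_{\e_i}(I(G)^{2})=\Delta(G)$ continue to hold by Lemma~\ref{degcomplex}. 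The diagram chase then produces a nonzero element of $x_1 H^{i+1}_{\m}(S/I(G)^{2})_{\0}$ whenever $\widetilde{H}^{i}(\Delta(G);k)\ne 0$, contradicting $(ii)$.

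The central step is $(iii)\Rightarrow(ii)$, which requires the ordinary-power analogue of Theorem~\ref{Main2}: under the hypotheses of $(iii)$ we must show that $H^{i}_{\m}(S/I(G)^{2})$ is supported only in the multidegrees $\0$, $\e_u$, and $\e_u+\e_v$ with $uv\in E(G)$, for every $i<\dim(S/I(G)^{2})$. Since $\rho_j(I(G)^{2})\le 2$ for all $j$, Takayama's formula confines the support to $\{0,1\}^n$, and the Cohen-Macaulayness of $\Delta(G)$ annihilates the contribution at $\0$ and at each $\e_u$. For $\a\in\{0,1\}^n$ with support $V$ of cardinality $\ge 2$, one reruns Case~1 and Case~2 of the proof of Theorem~\ref{Main2}: Corollary~\ref{ordinary-gCM} guarantees that every localization $G_i$ is triangle-free, so upon passing to the relevant link the primary components of $I(G)^{2}$ coincide with those of $I(G)^{(2)}$, and the contractibility plus Mayer--Vietoris argument there yields acyclicity of $\Delta_\a(I(G)^{2})$ in every remaining case except $|V|=2$ with $V\in E(G)$. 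Multiplication by any $x_w$ then carries $[H^{i}_{\m}(S/I(G)^{2})]_{\e_u+\e_v}$ into a multidegree outside the support (either some coordinate exceeds $\rho_j\le 2$, or the resulting multidegree $\e_u+\e_v+\e_w$ with $w\ne u,v$ is excluded), yielding $\m\cdot H^{i}_{\m}(S/I(G)^{2})=0$.

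Finally, $(ii)\Rightarrow(i)$ is obtained by applying \cite[Chapter~1, Proposition~3.10]{SV} exactly as in Theorem~\ref{Main3}: the concentration of $H^{i}_{\m}(S/I(G)^{2})$ in multidegrees of total degree at most $2$, together with $\m$-annihilation, upgrades 1-Buchsbaumness to Buchsbaumness. The main obstacle is the ordinary-power analogue of Theorem~\ref{Main2}: the minimal generators of $I(G)^{2}$ are strictly more restricted than those of $I(G)^{(2)}$ whenever $G$ contains a triangle, so in general $\Delta_\a(I(G)^{2})\ne\Delta_\a(I(G)^{(2)})$. The decisive point that resolves this obstacle is the no-triangle condition built into Corollary~\ref{ordinary-gCM}, which supplies exactly the local input needed to force the symbolic-case acyclicity computations to carry over verbatim for the ordinary power.
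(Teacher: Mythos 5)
Your overall architecture (trivial implications, $(iii)\Leftrightarrow(iv)$ from Corollary~\ref{ordinary-gCM}, a diagram argument for $(ii)\Rightarrow(iii)$, and an ordinary-power support statement plus \cite[Chapter~1, Proposition~3.10]{SV} for the upgrade to Buchsbaumness) is reasonable, but the central step is not correct as stated. Condition (iv) forbids triangles only in the localizations $G_i$, not in $G$ itself: for instance $G=K_3$ (or any complete graph $K_n$, $n\ge 3$) is Cohen--Macaulay, every $G_i$ is empty, and (iii)/(iv) hold, yet $G$ has triangles. For such $G$ the claimed ordinary-power analogue of Theorem~\ref{Main2} is false: for a triangle $\{p,q,r\}$ and $\a=\e_p+\e_q+\e_r$ one has $x_px_qx_r\in I(G)^{(2)}\setminus I(G)^2$, so $\D_{\a}(I(G)^{2})\supseteq\{\emptyset\}$ while $\D_{\a}(I(G)^{(2)})=\emptyset$; by Theorem~\ref{T} this gives $[H^0_{\m}(S/I(G)^{2})]_{\a}\ne 0$, a multidegree which is neither $\0$, nor $\e_u$, nor $\e_u+\e_v$. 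Hence your assertion that ``the primary components of $I(G)^{2}$ coincide with those of $I(G)^{(2)}$'' after passing to links, and the resulting acyclicity of $\D_\a(I(G)^2)$ in all remaining cases, fail exactly at these triangle multidegrees; moreover your final bookkeeping ($x_w$ maps $[H^{i}]_{\e_u+\e_v}$ into an ``excluded'' degree $\e_u+\e_v+\e_w$) is circular, since that degree is excluded only if the false support claim holds for $i=0$. A smaller point: you invoke Lemma~\ref{degcomplex} for $I(G)^{2}$, but that lemma assumes unmixedness, and $I(G)^2$ may have $\m$ as an embedded prime; the identities $\D_{\0}(I(G)^2)=\D_{\e_i}(I(G)^2)=\D(G)$ are true but must be checked directly from the definition of the degree complex.

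The paper avoids all of this by comparing with the symbolic power through the exact sequence $0\to I(G)^{(2)}/I(G)^{2}\to S/I(G)^{2}\to S/I(G)^{(2)}\to 0$. Under (iv), for every triangle $\{p,q,r\}$ and every vertex $w$ one has $x_wx_px_qx_r\in I(G)^{2}$ (either $w\in\{p,q,r\}$, or $w$ is adjacent to the triangle because $G_w$ is triangle-free), so $\m\cdot\bigl(I(G)^{(2)}/I(G)^{2}\bigr)=0$; thus $I(G)^{(2)}/I(G)^{2}$ has finite length and is concentrated in total degree $3$, whence $H^0_{\m}(S/I(G)^{2})=I(G)^{(2)}/I(G)^{2}$ and $H^i_{\m}(S/I(G)^{2})\cong H^i_{\m}(S/I(G)^{(2)})$ for $i\ge 1$. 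Theorem~\ref{Main2} together with the Cohen--Macaulayness of $\D(G)$ then places $H^i_{\m}(S/I(G)^{2})$, $0<i<\dim S/I(G)^{2}$, in total degree $2$, so each low local cohomology module sits in a single degree, $\m H^i_{\m}(S/I(G)^{2})=0$ for all $i<\dim$, and \cite[Chapter~1, Proposition~3.10]{SV} yields Buchsbaumness. Your proposal can be repaired either by inserting this exact-sequence comparison, or by explicitly adding the triangle multidegrees to your support statement for $H^0$ and proving their $\m$-annihilation as above; as written, the key step $(iii)\Rightarrow(ii)$ (equivalently $(iv)\Rightarrow(i)$) is unproved.
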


With the results just mentioned, it is natural to ask a question as follows.

\begin{ques}
Let $\Delta$ be a Cohen-Macaulay complex. Assume that $\bigcup_{i\in V}\st_{\D}(V\setminus\{i\})$ is Buchsbaum for all subsets $V\subseteq [n]$ with $2\le |V|\le \dim \Delta +1$. Is $S/I_{\Delta}^{(2)}$ 1-Buchsbaum (or even Buchsbaum)?
\end{ques}

%-------------------------------------------------------------------------------------------------------------------------------------------------------
\section{Applications for bipartite graphs}
To illustrate our results, in this section, we will classify all bipartite graphs in which the second symbolic power of their edge ideals are Cohen-Macaulay (resp. Buchsbaum, generalized Cohen-Macaulay). 

Recall that a graph $G$ is bipartite if its vertex set can be partitioned into two subsets $X$ and $Y$ so that every edge has one vertex in $X$ and one vertex in $Y$; such a partition $(X,Y)$ is called a bipartition of the graph, and $X, Y$ its parts. If every vertex in $X$ is joined to every vertex in $Y$ then $G$ is called a complete bipartite graph, which is denoted by $K_{|X|,|Y|}$. In \rm{[HH]}, they gave a classification of Cohen-Macaulay bipartite graphs. For later use, we also quote this result.

\begin{thm}[\cite{HeH}, Theorem 3.4] \label{HH} Let $G$ be a bipartite graph whose bipartition is $(V_1, V_2)$. Then, $G$ is Cohen-Macaulay if and only if  $|V_1|=|V_2|$ and the vertices $V_1=\{x_1,\ldots,x_n\}$ and $V_2=\{y_1,\ldots,y_n\}$ can be labeled such that: 
\begin{enumerate}
\item $x_i y_i$ is an edge of $G$ for all $1\le  i \le n$.
\item If $x_i y_j$ is an edge of $G$, then $i \le j$.
\item   If $x_i y_j$ and $x_j y_k$ are two edges of $G$ with $i < j < k$, then $x_i y_k$ is also an edge of $G$.
\end{enumerate}
\end{thm}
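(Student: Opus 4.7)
The plan is to prove both implications by induction on $n=|V_1|=|V_2|$, combining classical results on unmixed bipartite graphs with a direct analysis of links in the independence complex $\Delta(G)$.

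For the forward direction, Cohen-Macaulayness implies unmixedness, so every maximal independent set of $G$ has cardinality $\alpha(G)=\dim\Delta(G)+1$. Since both $V_1$ and $V_2$ are themselves independent sets in the bipartite graph, this forces $|V_1|=|V_2|=n$. A Cohen-Macaulay graph has no isolated vertex and, by König's theorem in the bipartite setting, admits a perfect matching; relabelling so that this matching becomes $\{x_iy_i\}_{i=1}^n$ yields condition (1). For condition (2), I would show that an edge $x_iy_j$ with $i>j$ can be used to construct a maximal independent set of cardinality strictly less than $n$, contradicting unmixedness. For condition (3), suppose $x_iy_j, x_jy_k\in E(G)$ with $i<j<k$ but $x_iy_k\notin E(G)$; the plan is to exhibit an $F\in\Delta(G)$ for which $\widetilde{H}_r(\lk_{\Delta(G)}(F);k)\ne 0$ for some $r<\dim\Delta(G)-|F|$, contradicting Reisner's criterion. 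A natural candidate is the link obtained by fixing enough vertices (outside $\{x_i,x_j,y_j,y_k\}$) to force a disconnection between the components containing $x_i$ and $y_k$ in the restricted independence complex.

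For the converse, assume conditions (1)--(3). The plan is to show that $\Delta(G)$ is pure shellable, which is stronger than Cohen-Macaulay. Note that under (1) and (3), the relation $i\preceq j\iff x_iy_j\in E(G)$ is a partial order on $[n]$ (reflexivity from (1), transitivity from (3)), and (2) says it refines the natural order on $[n]$. The facets of $\Delta(G)$ admit a clean description in terms of order-filters/antichains of this poset. I would order the facets lexicographically and verify the shelling condition directly, or alternatively proceed by induction: decompose $\Delta(G)=\st_{\Delta(G)}(\{x_n\})\cup(\Delta(G)\setminus\{x_n\})$, observe that the two pieces and their intersection are independence complexes of bipartite graphs on strictly fewer vertices satisfying (1)--(3) after relabeling, and paste together via Lemma~\ref{lema2.4}.

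The principal obstacle I anticipate lies in the forward direction: identifying the precise link whose reduced homology detects a violation of the transitivity condition (3). Unmixedness alone does not force (3), so the argument must exploit the full strength of the Cohen-Macaulay condition, namely vanishing of reduced homology of every link in every degree below the top, and pinpointing the right face $F$ requires a delicate combinatorial construction involving the three vertices $x_i,x_j,y_j,y_k$ and the putative non-edge $x_iy_k$.
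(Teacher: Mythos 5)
The paper itself offers no proof of this statement: it is quoted verbatim from \cite{HeH} (Theorem 3.4), whose proof runs through Alexander duality and the correspondence with distributive lattices, so your attempt is necessarily an independent route and must be judged on its own correctness. There is a genuine gap in your forward direction, and you have the division of difficulty exactly backwards. Condition (3) needs no homological input at all: once unmixedness gives the perfect matching $x_1y_1,\ldots,x_ny_n$ (your K\"onig step is fine), a maximal independent set has size $n$ precisely when it meets every matching edge; so if $x_iy_j,x_jy_k\in E(G)$ but $x_iy_k\notin E(G)$, the independent set $\{x_i,y_k\}$ is adjacent to both $x_j$ (via $x_jy_k$) and $y_j$ (via $x_iy_j$), hence any maximal independent set containing it misses the edge $x_jy_j$ and has size at most $n-1$, contradicting unmixedness. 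By contrast, condition (2) is exactly where the Cohen--Macaulay hypothesis must enter beyond unmixedness, and your proposed mechanism for it fails: (2) asserts the \emph{existence} of a labeling, i.e.\ (together with (3)) that the relation $i\to j$ whenever $x_iy_j\in E(G)$ is antisymmetric/acyclic, and this cannot be contradicted by unmixedness. The complete bipartite graph $K_{2,2}$ with matching $x_1y_1,x_2y_2$ is unmixed, satisfies (1) and (3) vacuously, and in \emph{every} labeling has an edge $x_iy_j$ with $i>j$ (both cross edges are present), yet no maximal independent set of size $<2$ exists; it is not Cohen--Macaulay because its independence complex is two disjoint edges. Ruling out such $2$-cycles genuinely uses Cohen--Macaulayness (for instance via disconnectedness of an appropriate link, which requires a careful choice of the face $F$ so that the link is the independence complex of the induced $4$-cycle), and your outline leaves precisely this step without a working argument.

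In the converse direction your poset description is sound: by (1) and (3) the relation $i\preceq j\iff x_iy_j\in E(G)$ is a partial order, facets of $\Delta(G)$ correspond to order ideals, and purity follows. But the inductive decomposition you sketch does not meet the hypotheses of Lemma~\ref{lema2.4}: for the chain $1\prec 2$ (edges $x_1y_1,x_2y_2,x_1y_2$) the deletion $\Delta(G\setminus\{x_2\})$ has facets $\{x_1\}$ and $\{y_1,y_2\}$, so it is not pure and in particular not Cohen--Macaulay of dimension $\dim\Delta(G)$. Deleting $y_n$ for $n$ a $\preceq$-maximal element instead does work: the deletion is a cone (over the then-isolated $x_n$) on $\Delta(G\setminus\{x_n,y_n\})$, the link of $y_n$ is a join of a simplex with the complex of the induced subposet on $\{j: j\not\preceq n\}$, and Lemma~\ref{lema2.4}(ii) applies; note also that the pieces are cones and joins over complexes of smaller graphs in the class, not literally independence complexes of graphs satisfying (1)--(3) as you assert. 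Your alternative of a lexicographic shelling of the facets is plausible (these complexes are indeed known to be shellable, even vertex decomposable), but as written it is an unverified claim rather than a proof.
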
 

\begin{prop}\label{Ex1}
Let $I(G)$ be the edge ideal of a bipartite graph $G$. Then, $S/I(G)^{(2)}$ is Cohen-Macaulay if and only if $G$ is a disjoint union of edges (i.e. $I(G)$ is a complete intersection). 
\end{prop}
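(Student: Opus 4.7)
The plan is to combine the characterization of special Cohen-Macaulayness in Theorem~\ref{Main1} with the classification of Cohen-Macaulay bipartite graphs in Theorem~\ref{HH}.

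The easy direction ($\Leftarrow$) is straightforward: if $G$ is a disjoint union of $n$ edges $x_iy_i$, then $\Delta(G)$ is the simplicial join $\seq{x_1,y_1}*\cdots*\seq{x_n,y_n}$, which is Cohen-Macaulay. For every edge $x_iy_i$, the localization $G_{x_iy_i}$ is a disjoint union of $n-1$ edges, hence Cohen-Macaulay with $\alpha(G_{x_iy_i})=n-1=\alpha(G)-1$. So $G$ is special Cohen-Macaulay and Theorem~\ref{Main1} applies.

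For the harder direction ($\Rightarrow$), I would assume $S/I(G)^{(2)}$ is Cohen-Macaulay, so by Theorem~\ref{Main1}, $G$ is special Cohen-Macaulay. Being Cohen-Macaulay and bipartite, $G$ admits the labeling $V_1=\{x_1,\ldots,x_n\}$, $V_2=\{y_1,\ldots,y_n\}$ of Theorem~\ref{HH}, and $\alpha(G)=n$. I would then argue by contradiction: suppose $G$ has an additional edge $x_ay_b$ with $a<b$, and show that the localization $H:=G_{x_ay_b}$ cannot simultaneously be Cohen-Macaulay and satisfy $\alpha(H)=n-1$.

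The key step is to count isolated vertices of $H$ in two ways. Let $I$ be the set of isolated vertices of $H$; since $\Delta(H)$ is a cone over $\Delta(H\setminus I)$, the graph $H\setminus I$ is Cohen-Macaulay bipartite without isolated vertices. Theorem~\ref{HH} then gives $|V_1(H)|-|I\cap V_1|=|V_2(H)|-|I\cap V_2|=\alpha(H\setminus I)$. Plugging in $|V_1(H)|=n-|N(y_b)|$, $|V_2(H)|=n-|N(x_a)|$, and $\alpha(H)=|I|+\alpha(H\setminus I)=n-1$ yields $|I\cap V_1|=|N(x_a)|-1$. On the other hand, an isolated $x_l\in V_1(H)$ satisfies $N_G(x_l)\subseteq N_G(x_a)$, which since $y_l\in N_G(x_l)$ forces $y_l\in N_G(x_a)$; a short application of conditions (2) and (3) of Theorem~\ref{HH} gives the converse for $l>a$. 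The membership $x_l\in V_1(H)$ requires $x_ly_b\notin E(G)$, which rules out $l=a$ (since $x_ay_b\in E(G)$) and $l=b$ (since $x_by_b\in E(G)$ by~(1)). Hence the number of isolated $x$'s is at most $|N(x_a)|-2$, contradicting $|I\cap V_1|=|N(x_a)|-1$.

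The main obstacle is the bookkeeping of the isolated vertices of $H$: Theorem~\ref{HH} applies to it only after stripping those off, and condition (3) of Theorem~\ref{HH} is the crucial tool that pins down exactly which $x_l$ become isolated. The combinatorial tension between the count forced by the special Cohen-Macaulay condition and the rigidity imposed by condition (3) is what ultimately precludes any extra edge $x_ay_b$.
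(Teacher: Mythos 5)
Your proof is correct and follows essentially the same route as the paper: both directions rest on Theorem~\ref{Main1} combined with Theorem~\ref{HH}, and the forward implication localizes at a non-matching edge $x_ay_b$ (with $a<b$) to contradict the requirement $\alpha(G_{x_ay_b})=\alpha(G)-1$ coming from special Cohen-Macaulayness. The only real difference is that you make the isolated-vertex bookkeeping in $G_{x_ay_b}$ explicit (deriving $|I\cap V_1|=|N(x_a)|-1$ against the structural bound $|N(x_a)|-2$), whereas the paper asserts $\alpha(G_{x_iy_j})=|V_2\setminus N(x_i)|\le \alpha(G)-2$ directly from Theorem~\ref{HH}; so yours is a more carefully executed version of the same counting step (and the parenthetical converse via conditions (2) and (3) is not actually needed for the contradiction).
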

\begin{proof} Assume that $S/I(G)^{(2)}$ is Cohen-Macaulay and $G$ is not a disjoint union of edges. Then, there exists an edge $x_iy_j\in E(G)$ for ($i < j$) by Theorem ~\ref{HH}. Since $G$  is special Cohen-Macaulay by Theorem ~\ref{Main1}, $G_{x_iy_j}$ is Cohen-Macaulay. One can check that $G_{x_iy_j}$ is also a bipartite graph whose bipartition is $(V_1\setminus N(y_j), V_2\setminus N(x_i))$ and $y_i,y_j\notin V_2\setminus N(x_i)$. By Theorem ~\ref{HH}, we have 
$$\alpha(G_{x_iy_j})=|V_2\setminus N(x_i)|\le |V_2|-2=\alpha(G)-2,$$ which is a contradiction.

Note that if $G$ is the disjoint union of edges, then $G$ is Cohen-Macaulay, and $G_{x_iy_i}$ is also the disjoint union of edges and $\alpha(G_{x_iy_i})=\alpha(G)-1$ for any edge $x_iy_i\in E(G)$. Then, it is clear that the converse part is given by Theorem ~\ref{Main1}. 
\end{proof}

\begin{prop}\label{Ex2} Let $G$ be a bipartite graph. Then, $S/I(G)^{(2)}$ is Buchsbaum if and only if $G$  is a path of length 3 or a disjoint union of edges.
\end{prop}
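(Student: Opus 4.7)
The strategy is to read off the Buchsbaum condition from Theorem~\ref{Main3}(iv): $S/I(G)^{(2)}$ is Buchsbaum if and only if $G$ is Cohen-Macaulay and $G_v$ is special Cohen-Macaulay for every vertex $v$. Since $G$ is bipartite, each $G_v$ is bipartite too, and the decisive observation is that in the bipartite setting special Cohen-Macaulayness coincides (up to isolated vertices) with being a disjoint union of edges. Indeed, Theorem~\ref{Main1}((i)$\Leftrightarrow$(iii)) says special Cohen-Macaulayness of $G_v$ is equivalent to $S/I(G_v)^{(2)}$ being Cohen-Macaulay, and Proposition~\ref{Ex1}, applied to the graph obtained from $G_v$ after discarding isolated vertices (adjoining extra polynomial variables preserves Cohen-Macaulayness), then forces $G_v$ to be a perfect matching on its non-isolated vertices. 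Thus I reduce the problem to classifying Cohen-Macaulay bipartite graphs $G$ such that $G_v$ is a matching (modulo isolated vertices) for every $v\in V(G)$.

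For the easy direction, if $G$ is a disjoint union of edges then $S/I(G)^{(2)}$ is even Cohen-Macaulay by Proposition~\ref{Ex1}, hence Buchsbaum. For $G=P_4$ on vertices $a$-$b$-$c$-$d$, a direct check gives $G_a=\{cd\}$ and $G_d=\{ab\}$ (single edges) and $G_b=\{d\}$, $G_c=\{a\}$ (isolated vertices), each of which is trivially a matching; combined with the reduction above this shows $S/I(P_4)^{(2)}$ is Buchsbaum.

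For the forward direction, assume $S/I(G)^{(2)}$ is Buchsbaum. I would label the vertices of $G$ as $x_1,\ldots,x_n$ and $y_1,\ldots,y_n$ as in Theorem~\ref{HH}, so that $x_iy_i\in E(G)$ for all $i$ and every additional edge $x_iy_j$ satisfies $i<j$. If there are no additional edges, $G$ is a matching and we are done. Otherwise I would first localise the extra edges: since $N(x_n)=\{y_n\}$, the graph $G_{x_n}$ equals the matching $\{x_iy_i:i<n\}$ together with every additional edge $x_iy_j$ having $j<n$; the matching requirement on $G_{x_n}$ therefore kills all such edges, forcing every additional edge to satisfy $j=n$. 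A symmetric argument at $G_{y_1}$ (where $N(y_1)=\{x_1\}$) forces every additional edge to satisfy $i=1$. Hence the only possible additional edge is $x_1y_n$.

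The main obstacle is to rule out $n\ge 3$ in the presence of the extra edge $x_1y_n$. For any middle index $k$ with $1<k<n$, no additional edge involves $x_k$, so $N(x_k)=\{y_k\}$ and $G_{x_k}$ contains the induced path $y_1$-$x_1$-$y_n$-$x_n$ coming from the edges $x_1y_1$, $x_1y_n$, $x_ny_n$; the vertex $x_1$ then has degree two in $G_{x_k}$, contradicting the requirement that $G_{x_k}$ be a matching. Hence $n\le 2$, and the case $n=2$ with extra edge $x_1y_2$ gives exactly $G=P_4$, completing the classification.
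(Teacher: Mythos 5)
Your argument is correct and essentially the paper's own: both rest on Theorem~\ref{Main3}(iv) together with the structure theorem for Cohen--Macaulay bipartite graphs (Theorem~\ref{HH}) and the Proposition~\ref{Ex1}-type observation that a special Cohen--Macaulay bipartite graph is a matching (up to isolated vertices), and both extract the classification by inspecting the vertex localizations at $y_1$, $x_n$ and a middle vertex. The differences are cosmetic: you take the localizations in a different order and you spell out the Buchsbaum verification for the path of length $3$, which the paper leaves implicit.
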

\begin{proof} Assume that $S/I(G)^{(2)}$ is Buchsbaum but not Cohen-Macaulay. By Corollary \ref{Cor1}, $G$ is Cohen-Macaulay. Then, $G$ is a graph as in Theorem \ref{HH}. If $n\le 2$, combining our assumption and Proposition ~\ref{Ex1} gives $G$ must be the path of length 3 and $n=2$. If $n>2$, by Theorem \ref{Main3}, $H=G_{y_1}$ is special Cohen-Macaulay. Note that $H$ is also a bipartite graph whose bipartition is $(V_1\setminus N(y_1), V_2\setminus N(x_1))$. Arguing as in the proof of Proposition ~\ref{Ex1}, we can see that $H$ is the disjoint union of edges. Hence, $N(x_i)=\{y_i\}$ for all $2\le i\le n$. 

Next observe that if $S/I(G)^{(2)}$ is not Cohen-Macaulay then $G$ is not the disjoint union of edges.  Therefore, there exists an edge $x_1y_j\in E(G)$ for some $j >1$. Similarly, $K=G_{x_n}$ is also the disjoint union of edges. It implies $x_1y_n\in E(G)$. By the same way, we have $L=G_{x_2}$ is also the disjoint union of edges, which is a contradiction since $x_1y_n\in L$. 
\end{proof}

\begin{prop}\label{Ex3} Let $G$ be a bipartite graph. Then, $S/I(G)^{(2)}$ is generalized Cohen-Macaulay if and only if $G$ is a complete bipartite graph $K_{n,n}$ for some $n\ge 2$ or a path of length 3 or a disjoint union of edges.
\end{prop}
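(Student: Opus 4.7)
My plan is to leverage Corollary~\ref{Cor2}, which says that $S/I(G)^{(2)}$ is generalized Cohen-Macaulay if and only if $G$ is unmixed and $G_v$ is special Cohen-Macaulay for every vertex $v$. For the sufficiency direction I would verify each of the three candidates directly. If $G = K_{n,n}$ with $n \ge 2$, then for any vertex $v$ we have $N(v) = V_{3-i}$ (when $v \in V_i$), so $G_v$ consists of $n-1$ isolated vertices and is vacuously special Cohen-Macaulay; if $G$ is the path of length~$3$ on $\{x_1,y_1,x_2,y_2\}$, each $G_v$ is an isolated vertex or a single edge; and if $G$ is a disjoint union of edges, each $G_v$ is again a disjoint union of edges. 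Unmixedness is clear in each case, so Corollary~\ref{Cor2} applies.

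For the necessity direction, I would use the Villarreal-type labeling available from unmixedness of a bipartite graph: write $V_1 = \{x_1,\dots,x_n\}$, $V_2 = \{y_1,\dots,y_n\}$ with $x_iy_i \in E(G)$ for all $i$. Combining Theorem~\ref{Main1} with Proposition~\ref{Ex1} shows that a bipartite special Cohen-Macaulay graph is precisely a disjoint union of edges (possibly after discarding isolated vertices). Applied to $G_{x_i}$, which is bipartite and, by Corollary~\ref{Cor2}, special Cohen-Macaulay, this forces every vertex of $G_{x_i}$ to have degree at most~$1$. For a surviving $y_k \in G_{x_i}$ (so $k \ne i$ and $x_iy_k \notin E(G)$), the bound $\deg_{G_{x_i}}(y_k) \le 1$ together with $x_k \in N_G(y_k) \setminus \{x_i\}$ forces $N_G(y_k) = \{x_k\}$. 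This yields the key dichotomy: for every $k$, either $y_k$ is \emph{universal} (with $N_G(y_k) = V_1$) or $y_k$ is \emph{matching-only} (with $N_G(y_k) = \{x_k\}$); by symmetry, the analogous dichotomy holds for every $x_j$.

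A short case analysis on the dichotomy then finishes the proof. If every $y_k$ is matching-only, then $G$ is a disjoint union of edges; if every $y_k$ is universal, then $G = K_{n,n}$. In the remaining \emph{mixed} case, choosing a universal $y_l$ and a matching-only $y_m$ and comparing adjacencies forces $x_l$ to be matching-only and $x_m$ to be universal. Two further comparisons then show that at most one $y$-vertex and at most one $x$-vertex can be universal, and that the hypothetical presence of any third index $p \in [n]\setminus\{l,m\}$ contradicts the universal/matching-only constraints at $(l,p)$ or $(p,m)$. Thus the mixed case forces $n \le 2$, and for $n = 2$ a direct tally of the forced edges produces precisely the path of length~$3$. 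I expect this last step to be the main obstacle: it is a tight but delicate combinatorial argument in which one must carefully track how the degree-at-most-one condition on the various $G_{x_i}$ and $G_{y_j}$ propagates through the dichotomy and collapses any ``mixed'' configuration down to $n \le 2$.
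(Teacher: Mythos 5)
Your proposal is correct, and while the skeleton is shared with the paper (both directions run through Corollary~\ref{Cor2} together with Proposition~\ref{Ex1}, i.e.\ ``bipartite special Cohen--Macaulay $=$ disjoint union of edges'' applied to the localizations $G_v$), your treatment of the necessity direction is genuinely different. The paper first splits off the disconnected case by localizing at facets of the components (via \cite[Corollary 3.5]{TT}) and then, in the connected case, picks a vertex $x$ of \emph{minimal degree}, analyzes $G_x$ as a disjoint union of edges, and finishes with a case analysis on $\deg(x)$ and on the number $r$ of edges of $G_x$, using connectivity to force the edges $yx_{i_j}$. You avoid both the connected/disconnected split and the minimal-degree trick: from unmixedness you invoke the standard perfect-matching labeling $x_iy_i\in E(G)$ (K\"onig/Villarreal; worth an explicit citation, since $G$ is only unmixed here, not yet Cohen--Macaulay, so Theorem~\ref{HH} is not available), and the degree-at-most-one property of every $G_{x_i}$, $G_{y_j}$ then yields the clean global dichotomy ``each vertex is universal or matching-only,'' from which the three outcomes (matching, $K_{n,n}$, and in the mixed case $n=2$ and the path of length $3$) drop out by a short exclusion argument -- I checked the mixed case and it closes exactly as you describe. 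What your route buys is a more transparent, connectivity-free combinatorial argument; what the paper's route buys is that it never needs the existence of a perfect matching in an unmixed bipartite graph as an external input. One small point to make explicit (the paper glosses it too): Proposition~\ref{Ex1} is stated under the no-isolated-vertices convention, so you should record that adjoining isolated vertices corresponds to a cone on the independence complex (equivalently a polynomial extension) and hence does not affect special Cohen--Macaulayness, which justifies ``disjoint union of edges after discarding isolated vertices'' for the localizations. Your sufficiency check is essentially the paper's (direct verification via Corollary~\ref{Cor2}, where the paper instead quotes Propositions~\ref{Ex1} and~\ref{Ex2} for two of the three cases).
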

\begin{proof} If $G$ is a complete bipartite graph $K_{n,n}$ for $n\ge 2$, then $S/I(G)^{(2)}$  is generalized Cohen-Macaulay but not Buchsbaum by Corollary ~\ref{Cor2} and Proposition ~\ref{Ex2}.

Now, we will prove the converse part. From $S/I(G)$ is unmixed (see \cite[Theorem 2.6]{HTT}), we may assume $(\{x_1,\ldots,x_n\},\{y_1,\ldots,y_n\})$ is the bipartition of $G$.

Assume $G$  is not connected. Set $$G=\bigcup_{i=1}^tW_i,$$
where $W_i$ is a connected component of $G$ for $1\le i\le t$. Fix $F_i\in\mathcal F(\D(W_i))$ for each $i=1,\ldots,t$. Since our assumption, $I(G)^{(2)}S[x_u^{-1}~|~u\in\cup_{i\ne j, 1\le j\le n}F_j]$ is Cohen-Macaulay. By \cite[Corollary 3.5]{TT}, $I_{\lk_{\D(G)}(\cup_{i\ne j, 1\le j\le n}F_j)}^{(2)}=I_{\D(W_i)}^{(2)}=I(W_i)^{(2)}$ is Cohen-Macaulay. Then $W_i$ consists of one edge by Proposition ~\ref{Ex1}. It implies that $G$ is the disjoint union of edges. Hence, $S/I(G)^{(2)}$ is Cohen-Macaulay.

Assume $G$ is connected. Let $x$ be the vertex of minimal degree of $G$ and $xy\in E(G)$. By Corollary ~\ref{Cor2}, $S/I(G_{x})^{(2)}$ is Cohen-Macaulay. It implies that $G_{x}$ is the disjoint union of edges or isolated vertices. Assume $G_x=\{x_{i_1}y_{i_1},\ldots,x_{i_{r}}y_{i_{r}}\}$ (i.e. $xy_{i_j}\notin G$ for all $1\le j\le r$ and $xz\in E(G)$ for all $z\notin\{y_{i_1},\ldots,y_{i_r}\}$). Since $G$ is connected, $yx_{i_j}\in E(G)$ for all  $j=1,\ldots,r$. 

{\bf Case 1}: $\deg (x)=1$. Then, $r=n-1$. If $r > 1$, then $S/I(G_{y_{i_r}})^{(2)}$ is Cohen-Macaulay (by Corollary ~\ref{Cor2}). So $G_{y_{i_r}}$ is the disjoint union of edges, which is a contradiction. If $r=1$, then $G$ is $\{xy, yx_{i_1}, x_{i_1}y_{i_1}\}$ which is the path of length 3. Hence, $S/I(G)^{(2)}$ is Buchsbaum (see Proposition ~\ref{Ex2}).

{\bf Case 2}: $\deg (x)>1$. If $r>0$, similarly, $G_{y_{i_r}}$ is also the disjoint union of edges. Then $\deg (y_{i_r})=1<\deg(x)$, which is a contradiction for choicing $x$. If $r=0$, then $xy_{i}\in E(G)$ for all $1\le i \le n$. From the minimality of $\deg (x)$, it follows that $G$ must be the complete bipartite graph, which completes the proof.

\end{proof}
\begin{ack}  We are grateful to N. V. Trung and L. T. Hoa for many suggestions and discussions on the results of this paper. We also thank to the two anonymous referees of this paper for their very useful corrections and suggestions.
\end{ack}

\end{document}